\documentclass[reqno]{amsart}
\usepackage{amsmath}
\usepackage{amsthm}
\usepackage{amsfonts}
\usepackage{amssymb}
\usepackage{graphicx}
\usepackage{mathrsfs}
\usepackage{cite}
\usepackage{textcomp}

\title
[Bounds for lattice sums]
{Multiplicative Diophantine approximation and bounds for lattice sums }
\author[M.M. SKRIGANOV]{M.M. SKRIGANOV}

\address{St. Petersburg Department of the Steklov Mathematical Institute 
of the Russian Academy of Sciences, 
27, Fontanka, St.Petersburg, 191023, Russia}

\email{maksim88138813@mail.ru}

\keywords{Diophantine approximation, Geometry of numbers, Lattice sums}

\subjclass[2010]{11J13, 11J87, 11H46, 42B05, 05A18.}

\numberwithin{equation}{section}

\newtheorem{theorem}{Theorem}[section]
\newtheorem{lemma}{Lemma}[section]
\newtheorem{proposition}{Proposition}[section]
\newtheorem{corollary}{Corollary}[section]

\theoremstyle{remark}
\newtheorem{remark}{Remark}[section]
\theoremstyle{remark}
\newtheorem{definition}{Definition}[section]

\def\bfa{\mathbf{a}}
\def\bfb{\mathbf{b}}

\def\bfm{\mathbf{m}}

\def\bfq{\mathbf{q}}

\def\bfu{\mathbf{u}}
\def\bfv{\mathbf{v}}
\def\bfw{\mathbf{w}}
\def\bfx{\mathbf{x}}
\def\bfy{\mathbf{y}}
\def\bfz{\mathbf{z}}

\def\Qq{\mathbb{Q}}
\def\Rr{\mathbb{R}}

\def\Zz{\mathbb{Z}}

\def\CCC{\mathcal{C}}
\def\DDD{\mathcal{D}}

\def\FFF{\mathcal{F}}
\def\GGG{\mathcal{G}}
\def\HHH{\mathcal{H}}

\def\LLL{\mathcal{L}}

\def\NNN{\mathcal{N}}

\def\PPP{\mathcal{P}}

\def\ZZZ{\mathcal{Z}}

\renewcommand{\le}{\leqslant}
\renewcommand{\ge}{\geqslant}

\numberwithin{equation}{section}

\numberwithin{equation}{section}
\theoremstyle{plain}

\newcommand{\bp}{\begin{proof}}
\newcommand{\ep}{\end{proof}}
\newcommand{\bl}{\begin{lemma}}
\newcommand{\el}{\end{lemma}}
\newcommand{\bt}{\begin{theorem}}
\newcommand{\et}{\end{theorem}}
\newcommand{\bd}{\begin{definition}}
\newcommand{\ed}{\end{definition}}
\newcommand{\ba}{\begin{arrow}}
\newcommand{\ea}{\end{arrow}}

\begin{document}


%
%
%
%

\begin{abstract}
	We apply the methods developed earlier in  Skriganov, 
	\textit{St.Petersburg Math. J.}, {\bf 6}(3), (1995), 635--664, and
	\textit{Inventiones Math}, 
	132(1), 1998, 1--72, to estimate the following sums arising in the context of 
 integer point counting in polyhedra 
	\begin{align*}
		\mathscr{S}(\,{\bf{\theta}},\varphi,\bfu, T)= 
		\,\sum_{m_d\ne0} 
		\frac{e^{2i\pi m_du_d}}{m_d}
		\prod_{1\le l\le d-1}\,\,
		\sum_{m_l\in\Zz}
		\frac{e^{2i\pi m_lu_l}}{\theta_{l}m_d -m_l}\,\, \varphi \,(T^{-1}\bfm)\, 
		,  
		\label{eq6.000}
	\end{align*}	
	where $\theta_1,\dots,\theta_{d-1}$ are real irrational numbers,
	$\bfm=(m_1,\dots,m_d)\in\Zz^d$, and $\varphi\,(\bfx), \bfx\in\Rr^d,$ is
	a rapidly decreasing function as $\bfx\to\infty$.
	
We show that such sums can be estimated in terms of 
\textit{multiplicative Diophantine approximation} to 
$\theta_1,\dots,\theta_{d-1}$. In particular, 
we show that	
\begin{equation*}
	\mathscr{S}(\theta,\varphi ,T)=	\sup\nolimits_{\bfu\in\Rr^d} 
	\big|\mathscr{S}(\,{\bf{\theta}},\varphi, 
	\bfu, T)\big|=	O(T^{\epsilon})\,,\quad T\to\infty\,,
\end{equation*}		
with any $\epsilon>0$, if $1,\theta_1,\dots,\theta_{d-1}$ are real algebraic 
numbers linear independent over the field of rationals $\Qq$.

\end{abstract}

\maketitle

\thispagestyle{empty}

\section*{Contents}

\noindent{} {1. Introduction and Main results} 

\noindent{} {2. Facts from Geometry of numbers}



\noindent{} {3. Dyadic minima of a lattice}
	
\noindent{} {4. Dyadic decompositions of lattice sums}




\noindent{} {5. Fourier integrals and lattice sums}

\noindent{} {6. Proof of Main bounds for lattice sums}

References
\enlargethispage{4\baselineskip}

\medskip

\section{Introduction and main results}
\label{sec1}

\label{sec6}
In this paper, we will study sums of the form 
\begin{align}
	\mathscr{S}(\,{\bf{\theta}},\varphi, T,\bfu)= 
	\,\sum_{m_d\ne0} 
	\frac{e^{2i\pi m_du_d}}{m_d}
	\prod_{1\le l\le d-1}\,\,
	\sum_{m_l\in\Zz}
	\frac{e^{2i\pi m_lu_l}}{\theta_{l}m_d -m_l}\,\, \varphi (T^{-1}\bfm)\, ,  
	\label{eq6.00}
\end{align}
where $\theta_1,\dots,\theta_{d-1}$ are real irrational numbers. 
This series is well defined if the small 
denominators  are compensated by a fast decay of $\varphi 
(\bfm)$ as $\bfm\to\infty$.

In the two-dimensional case, such sums are well studied and their estimates 
are described in terms of Diophantine approximations to an irrational 
number $\theta_1$, see, for example, \cite[Chap. 2, Sec. 3]{4****} and 
related historical remarks given there. In the multidimensional case, the situation 
is much more complicated, and estimates of such sums involve specific  simultaneous 
Diophantine approximations of irrational numbers $\theta_1,\dots,\theta_{d-1}$.
\\

\textbf{Definition 1.1.}
For $\kappa>0$, a set of numbers 
$\theta=(\theta_1,\dots,\theta_{d-1})\in\Rr^{d-1}$ is said to be
$\kappa$-\textit{multiplicatively approximable}, if the inequality
\begin{equation}
	m^{1+\kappa}\,\prod\nolimits_{\,j\in [d-1]}\,\, \langle 
	\theta_j\,\, m\rangle \ge c\,(\kappa) \,,
	\label{eq6.02*}
\end{equation}
is satisfied for all $m\in \Zz_{>0}$ with a constant $c\,(\kappa)>0$.
Here  $\langle x\rangle$ denotes the distance between $x\in\Rr$ and the set of 
integers $\Zz$, and we write 
$[n]=\{1,\dots,n\}$ for $n\in\Zz_{\ge 0}$.

If \eqref{eq6.02*} is satisfied with arbitrary small $\kappa >0$,
the set  $(\theta_1,\dots,\theta_{d-1})$ is said to be \textit{badly 
	multiplicatively approximable}.

\textbf{Remarks.}
\textit{(i)} It follows from the metric theory of Diophantine 
approximation that almost all 
$\theta = (\theta_1,\dots,\theta_{d-1})\in\Rr^{d-1}$ 
are badly multiplicatively approximable; more precisely, we have the inequality
\begin{equation*}
m\,(\log m)^{d-1+\epsilon}\,\prod\nolimits_{\,j\in [d-1]}\,\, \langle 
	\theta_j\,\, m\rangle \ge c_{\epsilon} 
\end{equation*}
with any $\epsilon >0$ and a constant $c_{\epsilon} >0$, 
see \cite[Chap. 1, Sec. 8]{25} and references therein.

\textit{(ii)}
It follows from  Schmidt's theorem on Diophantine approximation that a set
$(\theta_1,\dots,\theta_{d-1})$ is
badly multiplicatively  approximable if $1,\theta_1,\dots,\theta_{d-1}$ 
are real algebraic numbers linear independent over $\Qq$, see \cite[Chap. VI, 
Thm.1B]{19}.  

\textit{(iii)}.
For $d=2$ there are irrationalities $\theta_1$ (quadratic irrationalities, for 
example) such that \eqref{eq6.02*} 
is satisfied with $\kappa=0$.
At the same time, a famous conjecture of Littlewood states that  
there are 
no real numbers $\theta_1,\dots,\theta_{d-1}$ for $d\ge3$ to satisfy 
\eqref{eq6.02*} with 
$\kappa =0$, see \cite[Chap.V, Sec. 10.3] {4***},  \cite[Chap.II, Sec. 4] {19}, and 
\cite [Sec. 30.3]{26}.
\\

Recall that the space $\frak{S}(\Rr^d)$ of \textit{rapidly decreasing} 
functions consists of $\CCC^{\infty}$ functions $\varphi (\bfx),\,\bfx\in\Rr^d,$
which satisfy
\begin{equation*}
	\big|\,\frac{\partial^{b_1+\cdots+b_d}}{\partial x^{b_1}
		\dots\partial x^{b_d}}\,\,\varphi(\bfx)\,\big| < C_{\varphi,\bfb,a}\, 
		(\,1+|\bfx|_{\infty})^{-A},
		\quad |\bfx|_{\infty}=\max\nolimits_{j\in[d]}\,|x_j|\,,
	\label{eq6.02**}	
\end{equation*}
for all multi-indexes $\bfb=(b_1,\dots,b_d)\in\Zz_{\ge0}^d$
and with arbitrary large $A>0$, see \cite[Chap. 1, Sec. 3]{27}. 

In this paper, we will prove the following.

\begin{theorem}
	Let a set of numbers 
	$\theta = (\theta_1,\dots,\theta_{d-1})$  be
	$\kappa$-multiplicatively approximable with $\kappa>0$ and 
	$\varphi\in\frak{S}(\Rr^d)$. Then sum \eqref{eq6.00}
	satisfies the bound
	\begin{equation}
		\mathscr{S}(\theta,\varphi,T)=\sup\nolimits_{\bfu\in\Rr^d} 
		\big|\mathscr{S}(\,\theta,\varphi,T,\bfu,\,)\big|=		
		O\,\big(T^{\kappa}\,(\log T)^{d-1}\, \big)\, .
		\label{eq6.01*}
	\end{equation} 
	
	If a set of numbers $\theta = (\theta_1,\dots,\theta_{d-1})$ is badly 
	multiplicatively approximable, then \eqref{eq6.01*}
	takes the form
	\begin{equation}
		\mathscr{S}(\theta,\varphi,T)=		
		O_{\epsilon}\,\big(T^{\epsilon}\, \big)
		\label{eq6.02}
	\end{equation} 
	with any $\epsilon>0$.
	In particular, \eqref{eq6.02} is satisfied, if 
	$1,\theta_1,\dots,\theta_{d-1}$ are real algebraic numbers linear 
	independent over $\Qq$.
\end{theorem}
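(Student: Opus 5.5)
We bound $\mathscr{S}(\theta,\varphi,T,\bfu)$ uniformly in $\bfu$ by taking absolute values, so the phases $e^{2i\pi m_lu_l}$ play no role. This leaves
\begin{equation*}
\sum_{m_d\ne 0}\frac{1}{|m_d|}\prod_{l\in[d-1]}\sum_{m_l\in\Zz}\frac{|\varphi(T^{-1}\bfm)|}{|\theta_l m_d-m_l|} .
\end{equation*}
We reinterpret this through the lattice
\begin{equation*}
\Gamma=\{\gamma(\bfm)=(\theta_1m_d-m_1,\dots,\theta_{d-1}m_d-m_{d-1},m_d):\bfm\in\Zz^d\},
\end{equation*}
which is unimodular. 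The summand is $|\varphi(T^{-1}\bfm)|/|\mathrm{Nm}\,\gamma|$, where $\mathrm{Nm}\,\gamma=\prod_{j\in[d]}\gamma_j$. Since $|\theta_l m_d-m_l|\le C|m_d|$ on the relevant range, the cutoff $\varphi(T^{-1}\bfm)$ effectively restricts to $|\gamma_d|\le T^{1+\delta}$ and $|\gamma_j|\ll T$, with a tail that is $O(T^{-A})$. Points with $\gamma_j=0$ for some $j<d$ cannot occur, since the $\theta_l$ are irrational and $m_d\ne0$.

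Condition \eqref{eq6.02*} says precisely that $\Gamma$ has large norm-minimum. For $\gamma\in\Gamma$ with $\gamma_d=m\ne 0$, we have $|\gamma_1\cdots\gamma_{d-1}|\ge\prod_j\langle\theta_j m\rangle\ge c(\kappa)m^{-1-\kappa}$, hence $|\mathrm{Nm}\,\gamma|\ge c(\kappa)|\gamma_d|^{-\kappa}\gg T^{-\kappa}$ throughout the box.

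We then decompose dyadically in each coordinate, following the scheme indicated in the contents (Sections 3--4). For $\bfa\in\Zz^d$, let $\Pi_\bfa=\{\bfx:2^{a_j-1}<|x_j|\le 2^{a_j}\}$. The sum becomes
\begin{equation*}
\sum_\bfa 2^{-(a_1+\dots+a_d)}\,\#(\Gamma\cap\Pi_\bfa) .
\end{equation*}
The key counting lemma is a Davenport-type bound: if a unimodular lattice has no nonzero points in a box of volume $\asymp\rho$, then any box of volume $V$ contains $O(1+V/\rho)$ lattice points. We apply it with $\rho\asymp T^{-\kappa}$; boxes of volume below $\rho$ contain at most one point, by a difference argument. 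The norm bound also forces $\Gamma\cap\Pi_\bfa=\emptyset$ unless $2^{|\bfa|}\gg T^{-\kappa}$, where $|\bfa|=a_1+\dots+a_d$. Each nonempty box therefore contributes $O(2^{-|\bfa|}+1)\ll T^{\kappa}$. We group the $\bfa$ by $s=|\bfa|$, which ranges over values with $2^s\gg T^{-\kappa}$.

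The main obstacle is the counting of $\bfa$. Each $a_j$ ranges over $O(\log T)$ values: the upper ends come from $\varphi$, and the lower ends come from the minimum-norm bound together with $|\gamma_j|\le CT$. For fixed $s$ there are $O((\log T)^{d-1})$ vectors $\bfa$. Summing $2^{-s}\cdot O(1+2^s T^{\kappa})$ naively over $s$ would lose an extra logarithm, and the contribution of large $s$ must be handled via the $\varphi$ decay. The point is to show that the weighted count $2^{-s}\#(\Gamma\cap\Pi_\bfa)$ is $O(1)$ for typical $\bfa$ with $2^s\ge1$, using the volume bound, and is $O(T^\kappa)$ only in the small-norm range $T^{-\kappa}\ll 2^s<1$. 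That range contains $O(\kappa\log T)$ values of $s$, which gives a bound $O(T^\kappa(\log T)^{d})$.

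To remove one logarithm I would exploit the additional cancellation of the signed sum over $m_l$ (odd kernel $1/x$). Concretely, I would write $\sum_{m_l}1/(\theta_lm_d-m_l)\,\psi$ via Poisson summation and Fourier integrals (Section 5) as a rapidly convergent expression. Alternatively, I would pair $m_l$ with its mirror point so that the $l$-th dyadic sum collapses to $O(1)$ plus its nearest-integer term. This trades one free dyadic index for a bounded contribution and yields exactly $O(T^\kappa(\log T)^{d-1})$.

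The badly approximable case \eqref{eq6.02} follows by letting $\kappa\to0$ and absorbing the logarithms into $T^\epsilon$. The algebraic case follows from Remark (ii), Schmidt's theorem.
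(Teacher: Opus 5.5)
\emph{Verdict: there is a genuine gap at the last step.} Your setup can prove the theorem, by a route quite different from the paper's. The paper uses dyadic minima $\mu(L)$, a partition of unity in $X$ only, Poisson summation onto $D^{-1}_{\bfm(L)}\Lambda^{\perp}_{\theta}$, and Mahler-type bounds for elementary lattice sums. Your ingredients (absolute values, the norm bound $|\mathrm{Nm}\,\gamma|\ge c(\kappa)|\gamma_d|^{-\kappa}$, dyadic box counting) are sound. But you reach only $O(T^{\kappa}(\log T)^{d})$ and propose to recover a logarithm from cancellation. That fails for two reasons. First, your opening step discarded all phases and signs, so no cancellation over $m_l$ is available inside your argument. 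Second, and more to the point, the extra logarithm does not come from the $m_l$-sums. Mirror pairing would only neutralize the dyadic ranges with $|\gamma_l|\ge 1/2$. Already the nearest-integer part alone ($a_1,\dots,a_{d-1}\le -1$, $0\le a_d\le\log T$, $a_1+\dots+a_{d-1}\ge-(1+\kappa)a_d-O(1)$) involves $\asymp(\log T)^{d}$ boxes, and your estimate bounds each only by $T^{\kappa}$. The loss comes from using the single worst-case threshold $\rho\asymp T^{-\kappa}$ for every box.

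\emph{The fix.} Apply the norm bound at the height of each box. Two lattice points in $\Pi_{\bfa}$ whose difference $\gamma$ has $\gamma_d\ne0$ satisfy $|\gamma_d|\le 2^{a_d+1}$, hence $|\mathrm{Nm}\,\gamma|\ge c\,2^{-\kappa(a_d+1)}$. So the difference argument runs with $\rho(a_d)\asymp 2^{-\kappa a_d}$, giving $\#(\Gamma\cap\Pi_{\bfa})\ll 1+2^{|\bfa|+\kappa a_d}$. Nonemptiness forces $2^{-|\bfa|}\ll 2^{\kappa a_d}$. Thus:
\begin{itemize}
\item every box at height $2^{a_d}$ contributes $O(2^{\kappa a_d})$;
\item there are $O((\log T)^{d-1})$ boxes per height;
\item $\sum_{a_d\le\log T}2^{\kappa a_d}\ll_{\kappa}T^{\kappa}$ is geometric, and heights and widths beyond $T^{1+o(1)}$ are killed by the decay of $\varphi$.
\end{itemize}
This gives $O(T^{\kappa}(\log T)^{d-1})$ for the absolute series with no cancellation at all. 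It is the same mechanism as in the paper: Proposition~3.1 gives the scale-dependent bounds $\mu(L)\gg 2^{|L|_1/(\kappa+1)}$ and $\nu(\bfm(L))^{-d}\ll 2^{\kappa|L|_1/(\kappa+1)}$, and in Section~6 the sum over $n=|L|_1$ is geometric, so only $s^{d-1}$ survives.

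\emph{Smaller repairs.}
\begin{itemize}
\item Your Davenport lemma as stated (no points in \emph{some} box of volume $\rho$) is false in general. What you actually use is the difference argument.
\item That argument must be run on sub-boxes of width $<1/2$ in each $x_l$-direction, $l<d$. Differences with $\gamma_d=0$ are nonzero integer vectors of norm $0$, which \eqref{eq6.02*} does not control. For wide directions you can instead count $m_l$ directly for fixed $m_d$ and pass to the sub-vector of $\theta$ indexed by the narrow directions, which is again $\kappa$-multiplicatively approximable.
\item ``$O(2^{-|\bfa|}+1)$'' should read $O(2^{-|\bfa|}+\rho^{-1})$.
\item Your claimed $O(1)$ weighted count for $2^s\ge1$ is not justified by volume alone, though after the fix it is not needed.
\end{itemize}
Your deductions of \eqref{eq6.02} and of the algebraic case are fine.
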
 

\textbf{Remarks.}
\textit{(i)} With the hypothesis of Theorem~1.1, 
the series \eqref{eq6.00} converges absolutely and uniformly for $\bfu\in\Rr^d$.
Indeed, it follows from \eqref{eq6.02*} that
\begin{align*}
	|m_j\prod\nolimits_{l\in [d], l\ne j}(\theta_{l,j}m_j -m_l)|
	\ge |m_j|\prod\nolimits_{l\in [d]\setminus {j}}\,\, \langle 
	\theta_{j,l}\,\, m\rangle >c_{\kappa} \, |m_j|^{-\kappa}. 
\end{align*}
Therefore,
\begin{align}
	|\mathscr{S}(\,&\bf{\theta},\,\omega, T,\bfu)|
	\notag
	\\
	&\le \,\sum_{m_d\ne0}\, 
	\sum_{m_1,\dots,m_{d-1}\in\Zz}
	\frac{1}{|m_d\,(\theta_{1}m_d -m_l),\dots,(\theta_{d-1}m_d -m_{d-1})|}\,\, 
	|\varphi 
	(T^{-1}\bfm)|
	\notag
	\\
	&\ll\, \sum_{m_d>0}\, \,
	m_d^{\kappa}
	\,\, 
	\sum_{m_1,\dots,m_{d-1}\in\Zz}
	|\varphi 
	(T^{-1}\bfm)| =O_{\theta,\varphi}(\,T^{d+\kappa}\,)\,.
	\label{eq6.00a}
\end{align}
 
\textit{(ii)} The bound \eqref{eq6.01*} is sharp up to a 
logarithmic multiplier if the lower bound \eqref{eq6.02*} with some $\kappa>0$ is 
achieved on a subsequence of 
$m\to\infty$. Indeed, in this case, the corresponding Fourier coefficients 
in \eqref{eq6.00} are of order $\gg m^{\kappa}\,|\varphi (T^{-1}\bfm)|$ and so  
$\mathscr{S}(\theta,\omega,T)\gg T^{\kappa}$ on a subsequence of $T\approx 
m\to\infty$.

\textit{(iii)} 
It follows from results of Spencer \cite{24} that for almost all 
$\theta = (\theta_1,\dots,\theta_{d-1})\in\Rr^{d-1}$ the sum \eqref{eq6.00}  
satisfies the bound
$$\mathscr{S}(\theta,\varphi,T)=O_{\epsilon}\big((\,\log T\,)^{d+\epsilon}\big)$$ 
with any $\epsilon >0$.
In this paper, we do not consider metric results of this type, since our main 
interest is focused on sub-polynomial estimates for specific 
$\theta_1,\dots,\theta_{d-1}\in\Rr^{d-1}$, such as algebraic numbers.
\\

The results given  in Theorem~1.1 are stated in the form needed for applications to 
the problem of integer point counting. Such applications will be considered in the 
forthcoming paper   \cite{22*}. 
We will  derive Theorem 1.1 from Theorem 1.2 given below, where \eqref{eq6.00} is 
written as a lattice sum.

Let us introduce the following notation. For 
$\bfx=(x_1,\dots,x_d)\in\Rr^d$, 
we write $\bfx=(X,x_d)$, where $X=(x_1,\dots,x_{d-1})\in\Rr^{d-1},\, x_d\in\Rr$,
and $\bfx\centerdot\bfy=X\centerdot Y+x_d\,y_d$, where 
$X\centerdot Y=\sum\nolimits_{j\in[d-1]}\,x_j\,y_j$.

For a set of numbers $\theta=(\theta_1,\dots,\theta_{d-1})\in\Rr^{d-1}$, we 
define a lattice $\Lambda_{\theta}\subset\Rr^d$ by setting
\begin{align*}
\Lambda_{\theta}=&\{\,\bfx: x_j=\theta_j 
m_d-m_j,\,j\in[d-1],\,x_d=m_d,\,\bfm\in\Zz^d\,\}\notag
\\
=&\{\,\bfx=(X,x_d):X\in\Zz^{d-1}+x_d\,\theta,\,x_d\in\Zz\,\}\notag
\\
=&\bigsqcup\nolimits_{\,x_d\in\Zz^d\,}
\{\,\bfx=(X,x_d):X\in\Zz^{d-1}+x_d\,\theta\,\}\, .	  
\end{align*} 
In other words, $\Lambda_{\theta}$ is a disjoint union of shifted lattices
 $\Zz^{d-1}+x_d\,\theta$ located on the hyperplanes $\{\bfx:x_d\in\Zz\}$.
Notice that the $d-1$ dimensional cube $K=[-\frac12,\frac12)^{d-1}$ is 
a fundamental set for the lattice $\Zz^{d-1},\,K=\Rr^{d-1}/\Zz^{d-1}$.
Therefore, $K$ contains exactly one point of the shifted lattice 
$\Zz^{d-1}+x_d\,\theta$ for any $x_d\in\Zz$. In fact, such a point 
 is contained
in the open cube $K^{int}=(-\frac12,\frac12)^{d-1}$ if 
$\theta_1,\dots,\theta_{d-1}$ are irrational.

The dual lattice $\Lambda_{\theta}^{\perp}$ to $\Lambda_{\theta}$ is given by
\begin{align*}
	\Lambda_{\theta}^{\perp}=&\{\,\bfy: y_j=n_j,\, n_j\in\Zz,\,j\in[d-1],\,
	y_d=-\sum\nolimits_{j\in[d-1]}\,\theta_j\,n_j\,\}\notag
	\\
	=&\{\,\bfy=(Y,y_d):Y\in\Zz^{d-1},\, y_d=-\,\theta\centerdot Y\,\}\, .
\end{align*} 
Both lattices $\Lambda_{\theta}$ and $\Lambda_{\theta}^{\perp}$ are unimodular.
 Define a subset $\Lambda_{\theta}^{\natural}\subset\Lambda_{\theta}$ by setting
\begin{align*}
	\Lambda_{\theta}^{\natural}=\bigsqcup\nolimits_{x_d\in\Zz\setminus \{0\}}\,
	\{\,\bfx=(X,x_d): X\in\Zz^{d-1}+x_d\,\theta\,\}	\, .  
\end{align*} 

Consider the lattice sum
\begin{align}
	S(\,\Lambda_{\theta},\phi,\bfu, T\,)=
	\,\sum\nolimits_{\bfx\in\Lambda_{\theta}^{\natural}}\,\,\, 
	\frac{e^{2i\pi\,\bfu\centerdot\bfx}}{x_1\dots x_d}\,\,\,
	\phi (T^{-1}\bfx)\,,  
	\label{eq6.06*}
\end{align}	
One can easily check that the sums \eqref{eq6.00} and \eqref{eq6.06*} coincide: 
\begin{align*}
	S(\,\Lambda_{\theta},\phi,\bfv, T\,)=\mathscr{S}(\,\theta,\varphi,\bfu, T\,)\, ,
\end{align*}	
provided that $v_l=-u_l,\, l\in [d-1],\,
v_d=u_d+\sum\nolimits_{l\in [d-1]}\theta_l\, u_l,$ and
\begin{equation*}
	\phi(x_1,\dots,x_d)=
	\varphi(\theta_1\,x_d-x_1,\dots,\theta_{d-1}\,x_d-x_{d-1},\,x_d\,)\,.
	\label{eq6.08*}
\end{equation*}
This shows that Theorem~1.1 is a paraphrase of the following.

\begin{theorem}[Main bounds for lattice sums]
	Let a set of numbers 
	$\theta=(\theta_1,\dots,\- \theta_{d-1})\in\Rr^{d-1}$  
	be $\kappa$-multiplicatively approximable with $\kappa>0$ and 
	$\phi\in\frak{S}(\Rr^d)$.
 Then the lattice sum \eqref{eq6.06*} satisfies the bound
	\begin{equation}
	S(\,\Lambda_{\theta},\phi,T\,)=
		\sup\nolimits_{\bfu\in\Rr^d} 
	\big|S(\,\Lambda_{\theta},\phi,\bfu,T\,)\big|=	
		O\,\big(\,T^{\kappa}\,(\log T)^{d-1}\, \big)\, .
		\label{eq6.07}
	\end{equation} 
	
	If a set of numbers $\theta = (\theta_1,\dots,\theta_{d-1})$ is badly 
	multiplicatively approximable,  \eqref{eq6.07}
	takes the form
	\begin{equation}
		S\,(\,\Lambda_{\theta},\varphi,T\,)=		
		O_{\epsilon}\,\big(T^{\epsilon}\, \big)
		\label{eq6.02a}
	\end{equation} 
	with any $\epsilon>0$.
	In particular, \eqref{eq6.02a} is satisfied, if 
	$1,\theta_1,\dots,\theta_{d-1}$ are real algebraic numbers linear 
	independent over $\Qq$.
 \end{theorem}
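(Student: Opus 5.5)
We plan to prove Theorem~1.2 by a dyadic decomposition of the lattice sum \eqref{eq6.06*} over the coordinate sizes $|x_j|$, and then to estimate each dyadic piece with Poisson summation over the dual lattice $\Lambda_\theta^{\perp}$. The Diophantine input \eqref{eq6.02*} enters only through a lower bound for products of coordinates of nonzero lattice vectors. Since the sum over $x_d$ is restricted to $x_d\neq 0$ and $|x_j|<\frac12$ is possible only on a thin set, that bound reads
\begin{equation*}
\textstyle\prod_{j\in[d]}|x_j|\;\ge\; c(\kappa)\,|x_d|^{-\kappa}\gg T^{-\kappa}\qquad\text{for }\bfx\in\Lambda_\theta^{\natural},\ |\bfx|_\infty\ll T .
\end{equation*}
This is exactly Remark~(i) after Theorem~1.1, restricted to the range $|\bfx|_\infty\le T^{1+\epsilon}$. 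Points with $|\bfx|_\infty$ larger than this are killed by the rapid decay of $\phi$, so their contribution is $O(1)$ by the crude estimate \eqref{eq6.00a} applied with a large power of $T$ from $\phi$.

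\textbf{Step 1: dyadic decomposition.} Choose a smooth dyadic partition of unity $1=\sum_{s\in\Zz}\psi(2^{-s}t)$ on $t\ne0$, with $\psi$ even and supported in $\frac12\le|t|\le 2$. Apply it in each coordinate and write
\begin{equation*}
S=\sum_{\bfs\in\Zz^d}S_{\bfs},\qquad S_{\bfs}=\sum_{\bfx\in\Lambda_\theta}\frac{e^{2i\pi\bfu\centerdot\bfx}}{x_1\cdots x_d}\,\prod_j\psi(2^{-s_j}x_j)\,\phi(T^{-1}\bfx).
\end{equation*}
The contributing boxes satisfy $2^{s_j}\ll T^{1+\epsilon}$ (decay of $\phi$), $s_d\ge 0$, and $s_j\ge -\log_2|x_d|$ roughly. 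The crucial restriction comes from the lower bound above: a box $B_{\bfs}=\prod_j[2^{s_j-1},2^{s_j+1}]$ contains no point of $\Lambda_\theta^\natural$ unless
\begin{equation*}
\textstyle\sum_j s_j\ge -\kappa\log_2 T-O(1).
\end{equation*}
Hence only boxes with $\sigma=\sum_j s_j\ge-\kappa\log_2T-C$ contribute.

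\textbf{Step 2: bound each box.} On a box we have $|x_1\cdots x_d|^{-1}\asymp 2^{-\sigma}$. We need $|S_{\bfs}|\ll 2^{-\sigma}\,\#(\Lambda_\theta\cap B_{\bfs})$ together with a count of lattice points in boxes. The count is the natural place for the Poisson summation formula. We have
\begin{equation*}
\#(\Lambda_\theta\cap B)\ll \mathrm{vol}(B)+1
\end{equation*}
whenever every box of volume $\asymp\mathrm{vol}(B)$, suitably dilated, contains no nonzero dual-lattice point. By the symmetric role of $\Lambda_\theta^\perp$ (whose products of coordinates are controlled by the same multiplicative condition, via transference or a direct Dirichlet-type argument), this holds except for boxes of small volume. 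More simply, for boxes with $\mathrm{vol}(B)\le c\,T^{-\kappa}$ the difference of two points of $\Lambda_\theta\cap B$ lies in $2B$ and has small coordinate product. It is therefore either zero or has zero $d$-th coordinate, which forces it to lie in $\Zz^{d-1}\times\{0\}$ and is impossible for $|x_j|<\frac12$. So such boxes contain at most one point.

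Summing $2^{-\sigma}(2^{\sigma}+1)$ over $\bfs$ gives two contributions. The boxes with $\sigma<0$ contribute
\begin{equation*}
\sum_{\sigma\ge-\kappa\log T}2^{-\sigma}\cdot\#\{\bfs:\ \textstyle\sum_j s_j=\sigma\}\ll T^\kappa(\log T)^{d-1},
\end{equation*}
since there are $(\log T)^{d-1}$ choices of $\bfs$ for each value of $\sigma$. The boxes with $\sigma\ge0$ contribute terms of size $\mathrm{vol}\cdot2^{-\sigma}=O(1)$, but there are about $(\log T)^d$ of them, so this naive count gives an extra logarithm.

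\textbf{Step 3: the large boxes.} For boxes with $\sigma\ge0$ we cannot simply count points. Instead we use the oscillation: the odd weight $1/(x_1\cdots x_d)$ and Poisson summation give
\begin{equation*}
S_{\bfs}=\sum_{\bfy\in\Lambda_\theta^\perp}\widehat{F_{\bfs}}(\bfy-\bfu),
\end{equation*}
with $\widehat{F_{\bfs}}$ concentrated on the dual box of sides $2^{-s_j}$. Summing $F_{\bfs}$ over all $\bfs$ with $\sigma\ge0$ reassembles a truncated Fourier transform of $\mathrm{p.v.}\,1/(x_1\cdots x_d)$, which is bounded: a product of sign-type functions. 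The contribution of the dual lattice points near $\bfu$ is then controlled using the multiplicative condition for $\Lambda_\theta^\perp$, which is again bounded by $T^{\kappa}(\log T)^{d-1}$.

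The main obstacle is this last step: obtaining uniform-in-$\bfu$ control of the large boxes without losing a power of $\log T$ or needing the dual Diophantine condition in an unproved form. I would first try to avoid duality entirely, by pairing $\bfx$ with $-\bfx$. That pairing makes the non-oscillatory part cancel only when $\bfu=0$, so for general $\bfu$ I expect Fourier-integral estimates of the kind the contents list places in Section~5.

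The last claims then follow at once. Badly approximable $\theta$ satisfy the hypothesis for every $\kappa>0$, which gives $O_\epsilon(T^\epsilon)$, and algebraic $\theta$ qualify by Remark~(ii) after Definition~1.1, that is, by Schmidt's theorem.
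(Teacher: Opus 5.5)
There is a genuine gap, and it lies in Step~2, not Step~3. Your summation needs every box to contribute $\ll 2^{-\sigma}+1$, that is, $\#(\Lambda_\theta\cap B_{\mathbf s})\ll\mathrm{vol}(B_{\mathbf s})+1$. This fails under the hypotheses of the theorem.

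Let $\bfx_0\in\Lambda_\theta^{\natural}$ have coordinate product $P$. The hypothesis only gives $P\ge c\,|x_{0,d}|^{-\kappa}$, so $P$ can be small: for instance $d=2$ and $\bfx_0=(\theta_1q-p,\,q)$, with $p/q$ a convergent followed by a large partial quotient. The multiples $k\bfx_0$, $R\le k<2R$, put $\asymp R$ points into $O(1)$ dyadic boxes of volume $\asymp R^dP$. So for $1\ll R\ll P^{-1/(d-1)}$ such a box contributes $\asymp R^{1-d}P^{-1}$, which exceeds your bound $1+R^{-d}P^{-1}$. Consequently the boxes with $\sigma<0$ lose the geometric decay in $\sigma$ on which your $(\log T)^{d-1}$ count rests, and the boxes with $\sigma\ge 0$ need not be $O(1)$.

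The appeal to $\Lambda_\theta^{\perp}$ cannot repair this. The natural criterion is that the dual box, of volume $\asymp\mathrm{vol}(B)^{-1}$, contains no nonzero dual point; by Minkowski's theorem this fails for every small box. Moreover $\Lambda_\theta^{\perp}$ contains $(0,\dots,0,1)$ (because $x_d\in\Zz$ on $\Lambda_\theta$), so no lower bound for coordinate products of dual vectors can hold. Only the boxes with $\mathrm{vol}(B_{\mathbf s})\le cT^{-\kappa}$ are actually handled.

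Step~3 is only a sketch, and its stated worry is misplaced. Since $(\log T)^d=o\big(T^{\kappa}(\log T)^{d-1}\big)$ for $\kappa>0$, $O(1)$ contributions from $(\log T)^d$ boxes would be harmless. No cancellation is needed either: the paper's bounds for $h_0,h_\infty$ are pure size bounds. The real difficulty is the count.

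The missing idea is the paper's use of \emph{dyadic minima}, which make the count exact.
\begin{itemize}
\item The paper decomposes dyadically only in $x_1,\dots,x_{d-1}$, with multi-index $L\in\ZZZ_J$; the coordinates in $J'$ carry the unbounded cutoff $\xi_\infty(2x_j)$.
\item It does not decompose $x_d$ at all. By Lemma~3.1, every point of $\Lambda_\theta^{\natural}$ in the support of the $(J,L)$-piece has $|x_d|\ge\mu(L)$. So $\xi_\infty(x_d/\mu(L))$ can be inserted for free, and the sum extended to all of $\Lambda_\theta$, which makes Poisson summation available.
\item The equalizer $D_{\bfm(L)}$ maps the box $\{|x_j|\le 2^{-l_j-1},\ |x_d|\le\mu(L)\}$ to $[-\nu,\nu]^d$ with $\nu=\nu(\bfm(L))$. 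By the very definition of $\mu(L)$, one gets $\lambda_1(D_{\bfm(L)}\Lambda_\theta)=\nu$ exactly.
\item The $\kappa$-condition then gives $\nu^{-d}\ll 2^{\kappa|L|_1/(\kappa+1)}$ and $\mu(L)\gg 2^{|L|_1/(\kappa+1)}$ (Proposition~3.1).
\item After Poisson summation and rescaling, each piece is a sum over $D_{\bfm(L)}^{-1}\Lambda_\theta^{\perp}$ of a rapidly decreasing function of size $\ll(\log T+|L|_1)^{|J'|}\log T\,\big(1+T^{-1}\mu(L)\big)^{-a}$.
\item Proposition~2.1 with Mahler's theorem bounds the number of relevant dual points by $\nu^{-d}+\lambda_1(D_{\bfm(L)}\Lambda_\theta)^{-d}=2\nu^{-d}$. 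The dual of the lattice being summed is $D_{\bfm(L)}\Lambda_\theta$ itself, so only the primal Diophantine condition enters and no transference is needed.
\item The weight $2^{\kappa n/(\kappa+1)}$ is geometric in $n=|L|_1$ and is cut off at $n\approx(\kappa+1)\log T$. So the sum over $L\in\ZZZ_J$ is dominated by its top range and costs $(\log T)^{|J|-1}$ rather than $(\log T)^{|J|}$.
\item Together with the $|J'|+1$ logarithms this gives $T^{\kappa}(\log T)^{d-1}$. The case $J=\emptyset$ gives $(\log T)^d$, which is smaller.
\end{itemize}

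Your closing paragraph, deducing the badly approximable and algebraic cases, is fine.
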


The main ideas of the proof of Theorem 1.2 can be outlined as follows.

The set of  unimodular lattices $\Gamma\subset\Rr^d$ can be identified with
the homogeneous space $\LLL_d=SL_d(\Rr)/SL_d(\Zz)$.
A fundamental fact of the modern theory of Diophantine approximation is that many 
 problems in this field can be formulated, investigated and solved 
 using geometry of the space of lattices $\LLL_d$ and methods from the theory of 
 flows on homogeneous spaces.
 We refer to \cite{16, 16a, 26} for 
an overview of such an approach to Diophantine analysis. 

This approach has been used in our papers \cite{21, 22}
to estimate lattice sums arising in the lattice point problem for polyhedra.
It has been shown that such sums for lattices $\Gamma\in\LLL_d$ can be expressed 
in terms of special sums over the orbits $\{D\,\Gamma^{\perp}:D\in\mathscr{D}\}$ of 
dual lattices $\Gamma^{\perp}$ under the action of the subgroup  of diagonal 
matrices
\begin{equation*}
\mathscr{D}	=\{\,D(\bfa)=
	\text{diag}\, (2^{a_1},\dots,2^{a_d}):
	a_1+\cdots + a_d=0\,\}\, .
\end{equation*}
The key points in the construction of sums over orbits are dyadic  
minima 
of lattices and dyadic decompositions of lattice sums.
In this paper, 
the approach  given in \cite{21, 22}  
will be
simplified and  adapted to lattices $\Lambda_{\theta}$. 
The proof of Theorem 1.2 will be given as a sequence of simple facts and results 
from Geometry of numbers and Fourier analysis.


\section{Facts from  geometry of numbers}
\label{sec2}

\textit{2.1. Successive minima of a lattice.}
Let $\Gamma\in\LLL_d$ be an unimodular lattice and 
$$0<\lambda_1(\Gamma)\le\dots\le\lambda_d(\Gamma) $$
denote successive minima of $\Gamma$ with respect to the norm 
$|\cdot|_{\infty}$
Recall that 
$\lambda_j(\Gamma)$ is the minimum value of $t$ such that
the cube $K_{t}=\{\bfx:|\bfx|_{\infty}\le t\}$ contains at least
$j$ linear independent points of $\Gamma$. In particular, 
\begin{equation}
\lambda_1(\Gamma)=\min\nolimits_{\bfx\in\Gamma\setminus\{0\}}\,|\bfx|_{\infty}
\label{eq6.08}
\end{equation}

A theorem of Minkowski  states that successive minima satisfy
\begin{equation*}
	(d!)^{-1}\le\,\lambda_1(\Gamma)\dots\lambda_d(\Gamma)\,\le 1\, ,
\end{equation*}
so that $\lambda_1(\Gamma)$ is bounded from above and $\lambda_d(\Gamma)$ from 
below by constants independent of $\Gamma$, see \cite[Chap.VIII, Sec.4, 
Thm V]{4***} and \cite[Chap.2, Sec.9, Thms~1~and~2]{12}.

A theorem of Mahler states that successive minima of dual lattices $\Gamma$
and $\Gamma^{\perp}$ are related by
\begin{equation*}
d^{-1}\le\, \lambda_j(\Gamma)\,\,\lambda_{d-j+1}(\Gamma^{\perp})\,\le (\,d!\,)^2 ,	
	\end{equation*}
	see \cite[Chap.VIII, Sec.5, Thm VI]{4***} 
	and \cite[Chap.2, Sec.14, Thm 5]{12}.
\\	
	

\textit{2.2. Elementary lattice sums.}	We write 
\begin{equation*}
N(t,\Gamma,\bfu)=\#\,\{(K_t+\bfu)\cap\Gamma\}=
\sum\nolimits_{\bfx\in\Gamma}\chi (t,\bfx-\bfu)	
	\end{equation*}	
	 for the number of lattice points in the shifted cube $K_t+\bfu$; here 
	 $\chi(t,\cdot)$ is the indicator function of $K_t$. We also put
\begin{equation*}
	N(t,\Gamma)=\sup\nolimits_{\bfu\in\Rr^d} N(\Gamma,t,\bfu).	
\end{equation*}

	For $A>d$ and $\nu>0$, we define  
	an \textit{elementary lattice sum} by setting
	\begin{equation*}
	f_A(\nu,\Gamma,\bfu)=\sum\nolimits_{\bfx\in\Gamma}
	\, (\,1+\nu\,|\bfx-\bfu|_{\infty})^{-A}\,.
		\end{equation*}
	It is clear that this series converges and $f_A(\Gamma,\nu,\bfu)$ is a positive 
	periodic function of 
	$\bfu\in\Rr^d$. We  put	
\begin{equation*}
	f_A(\nu,\Gamma)=\sup\nolimits_{\bfu\in\Rr^d}f_A(\Gamma,\nu,\bfu)\,.
\end{equation*}		

\begin{proposition}
	(i) We have
	\begin{align*}
N(t,\Gamma)\le
c\,\big(\,t^d +\lambda_d\,(\Gamma)^d\,\big)
	\le C\,\big(\,t^d +\lambda_1\,(\Gamma^{\perp})^{-d}\,\big)\, 
		\end{align*}
with  constants independent of $\Gamma$.
		
		(ii) We have
		\begin{align*}
		f_A(\nu,\Gamma)
		\le c_A\,\big(\,\nu^{-d} +\lambda_d\,(\Gamma)^d\,\big)
		\le C_A\,\big(\,\nu^{-d} +\lambda_1\,(\Gamma^{\perp})^{-d}\,\big)\,.
	\end{align*}	
	with  constants  depending only on $A>d$.
	\end{proposition}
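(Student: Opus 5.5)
The plan is to prove (i) by a covering argument based on a reduced basis, and then to obtain (ii) from (i) by summing over dyadic shells. The second inequality in each part comes directly from Mahler's theorem: $\lambda_d(\Gamma)\le (d!)^2\lambda_1(\Gamma^{\perp})^{-1}$. So it suffices to prove the bounds in terms of $\lambda_d(\Gamma)$.

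For (i), I would fix linearly independent vectors $\bfb_1,\dots,\bfb_d\in\Gamma$ with $|\bfb_j|_\infty=\lambda_j(\Gamma)$. These generate a sublattice $\Gamma'\subset\Gamma$ of finite index. I would rather work with a genuine basis of $\Gamma$ with $|\bfb_j|_\infty\le c_d\lambda_j(\Gamma)$; such a basis exists by the standard lemma from \cite[Chap.2]{12} or \cite[Chap.VIII]{4***}. Its fundamental parallelepiped $P=\{\sum s_j\bfb_j: 0\le s_j<1\}$ then has volume $1$ (unimodularity) and diameter $\operatorname{diam}_\infty P\le d\,c_d\lambda_d(\Gamma)=:R$.

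The translates $\bfx+P$, $\bfx\in\Gamma$, are disjoint. For each $\bfx\in (K_t+\bfu)\cap\Gamma$, the translate $\bfx+P$ lies in $K_{t+R}+\bfu$. Comparing volumes gives
\[
N(t,\Gamma,\bfu)\le (2(t+R))^d\le 2^{2d-1}\big(t^d+R^d\big),
\]
uniformly in $\bfu$. This yields $N(t,\Gamma)\le c\,(t^d+\lambda_d(\Gamma)^d)$. The only nonroutine ingredient is the existence of a basis comparable to the successive minima. If I wished to avoid it, I would instead use $\Gamma'$: the same volume argument bounds points of $\Gamma'$, and the index $[\Gamma:\Gamma']=\det\Gamma'\le\prod\lambda_j\cdot d^{d}$ is bounded via Minkowski's theorem. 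Each coset of $\Gamma'$ is handled by a shift $\bfu$, so the sup over $\bfu$ absorbs it.

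For (ii), I would decompose $\Gamma$ into shells according to $|\bfx-\bfu|_\infty$. Points with $|\bfx-\bfu|_\infty\le\nu^{-1}$ number at most $N(\nu^{-1},\Gamma)$ and contribute at most $1$ each. For $k\ge1$, points with $2^{k-1}\nu^{-1}<|\bfx-\bfu|_\infty\le 2^k\nu^{-1}$ number at most $N(2^k\nu^{-1},\Gamma)$ and contribute at most $2^{-(k-1)A}$ each. Applying (i) gives
\[
f_A(\nu,\Gamma,\bfu)\le c\sum_{k\ge0}2^{-(k-1)A}\big(2^{kd}\nu^{-d}+\lambda_d(\Gamma)^d\big)\le c_A\big(\nu^{-d}+\lambda_d(\Gamma)^d\big).
\]
The series converges because $A>d$. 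Taking the sup over $\bfu$ finishes (ii).

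The main obstacle is the step in (i) that controls the diameter of a fundamental domain by $\lambda_d(\Gamma)$. Everything else is a volume comparison and a geometric series.
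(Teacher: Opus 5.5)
Your proposal is correct and follows the paper's route. Part (i) is the same argument: a volume comparison for the disjoint translates of the fundamental parallelepiped of a basis with $|\bfb_j|_\infty\ll\lambda_d(\Gamma)$ (the paper cites Cassels, Chap.~V, Lemma~8), followed by Mahler's theorem. For part (ii) you sum over dyadic shells where the paper writes $f_A(\nu,\Gamma,\bfu)$ as a Stieltjes integral against $dN(t,\Gamma,\bfu)$ and integrates by parts; this is just the discrete form of the same summation-by-parts reduction to (i), and both versions need only $A>d$.
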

	\begin{proof}
\textit{(i)} It follows from the known results of geometry of numbers that a 
lattice $\Gamma\in\LLL_d$ has a basis $\bfb_1,\dots,\bfb_d$ such that
$|\bfb_j|_{\infty}\le\frac{d}{2}\lambda_d(\Gamma), \,\, j\in [d]$, 
see \cite[Chap. V, Lemma 8]{4***}. Consider a parallelepiped $\PPP$ spanned on 
this basis,
$$\PPP=\{\,\bfy: \bfy=t_1\,\bfb_1+\cdots +t_d\,\bfb_d,\,\,0\le t_j<1,\,\, 
j\in[d]\,\}.$$
The volume and diameter of $\PPP$ satisfy
$\text{vol}\,\PPP=1$ and 
$\text{diam}\,\PPP\le\frac{d^2}{2}\lambda_d\,(\Gamma)$. 
$\PPP=\Rr^d/\Gamma$ is a fundamental set of $\Gamma$, so that each shifted set 
$\PPP+\bfx$ 
contains exactly one lattice point, and
we have the partition
$
\Rr^d=\bigsqcup\nolimits_{\bfx\in\Gamma}\,
\{\,\PPP +\bfx\,\}	\, 
$. 
Therefore,
\begin{align*}
N(t,\Gamma,\bfu)&\le\#\{\bfx\in\Gamma: (\PPP+\bfx)\cap (K_t+\bfu)\ne\emptyset\}
\notag
\\
&\le \text{vol}\, K_{t+ \text{diam}\PPP}=2^d\,\big( 
t+\text{diam}\,\PPP\big)^d
\notag
\\
&\le 2^d\,\Big( t+\frac{d^2}{2}\lambda_d(\Gamma)\Big)^d
\notag
\\
&\le c\,\big(\,t^d +\lambda_d\,(\Gamma)^d\,\big)
\le C\,\big(\,t^d +\lambda_1\,(\Gamma^{\perp})^{-d}\,\big)\,,
\end{align*}
where Mahler's theorem 
has been used in the latest line.

\textit{(ii)}
The sum $f_A(\nu,\Gamma,\bfu)$ can be written as a Stieltjes integral:
\begin{align*}
	f_A(\nu,\Gamma,\bfu)&=\int\nolimits_0^{\infty} f_A(\nu\,t)\,d N(t,\Gamma,\bfu)
	\notag
	\\
	&=f_A(0)N(0,\Gamma,\bfu)-\int\nolimits_0^{\infty} \nu 
	f'_A\,(\nu\,t)\, 
	N(t,\Gamma,\bfu)\, dt
	\,.
\end{align*}
The part (i) implies  
\begin{align*}
	f_A(\Gamma,\nu,\bfu)\ll f_A(0)+\nu^{-d}\,\int\nolimits_0^{\infty} 
	t^d\,|f'_A(t)|\,dt\, 
	+\lambda_d(\Gamma)^d\,\int\nolimits_0^{\infty} |f'_A(t)|\,dt
\end{align*}
and the required bound follows.
\end{proof} 
	
\section{Dyadic minima of a lattice}
\label{sec3}

\textbf{Definition 3.1.}
For  $L\in\Zz^{d-1}_{\ge0}$, 
consider the following cylindrical subsets
\begin{align*}
\CCC(L)=\{\,\bfx\in\Rr^d: |x_j|\le 2^{-l_j-1},\,j\in[d-1],\, x_d\in\Rr\,\}\,.
\end{align*}
The \textit{dyadic minima} of a lattice 
$\Lambda_{\theta}\subset\Rr^d$	are positive integers 
$\mu(L),\,L\in\Zz^{d-1}_{\ge0},$ defined by
\begin{equation}
	\mu(L)=\min\,\{\, |x_d|: \bfx\in\Lambda_{\theta}^{\natural}\cap\CCC(L)\,\}\, ,
	\label{eq6.014*}	
\end{equation}	

A few immediate consequences of the definition can be mentioned.

\textit{(i)} If $L=0$, then $\mu(0)=1$.

\textit{(ii)} If at least one of $\theta_1,\dots,\theta_{d-1}$ is rational, then
$\mu(L)\le m$, for all $L\in\Zz_{\ge0}^{d-1}$, where $m>0$ is the  least
integer such that the point 
$(0,\dots,0,m) \in\Lambda^{\natural}_{\theta}$.

\textit{(iii)} If all $\theta_1,\dots,\theta_{d-1}$ are irrational, then 
$\mu(L)\to\infty$ as $L\to\infty$.

\textit{(iv)} If $\mu(L)$ and $\mu(L')$ are two dyadic minima and 
$l_j\ge l'_j,\,j\in[d-1]$, then $\mu(L)\ge m(L')$. 

Instead of $L\in\Zz^{d-1}_{\ge0}$ in \eqref{eq6.014*} one could consider 
$L\in\Zz^{d-1}$. However, this does not lead to any generalization of the 
definition, because it is easy to prove the relation $\mu(L)=\mu(L^{+})$, where
$L^{+}=(l_1^{+},\dots,l_{d-1}^{+})$, and 
$l^{+}=\max\{l,0\},\,l\in\Zz$.

Here is a result of this type, which will be needed later.
Let us introduce additional notations. 
For a subset $J\subseteq [d-1]$, we write
$J'=[d-1]\setminus J$,  $|J|=\#\{J\}$.  
Define subsets $\ZZZ_J\subset\Zz^{d-1}_{\ge 0}$ by
\begin{equation*}
	\ZZZ_J=\{\,L\in\Zz_{\ge 0}^{d-1}: l_j\in\Zz_{>0},\,\,\text{if}\,\, j\in J
	\,\,\text{and}\,\, l_j=0,\,\,\text{if}\,\, j\in J'\,\}\,.
\end{equation*}
Notice that these subsets form a partition of $\Zz^{d-1}_{\ge 0}$,
\begin{align*}
	\Zz^{d-1}_{\ge 0}=\,\bigsqcup\nolimits_{J\subseteq [d-1]}\,\ZZZ_J\, .  
\end{align*} 
For $L\in\ZZZ_J$, consider the following cylindrical subsets
\begin{align}
	\CCC_J(L)=\{\bfx\in\Rr^d: 
	&|x_j|\le2^{-l_j-1},\,\,\text{if}\,\, j\in J,\,\,
	\notag
	\\
	\,\,&x_j\in\Rr,\,\,\text{if}\,\,j\in J',\,\,\text{and}\,\, 
	x_d\in\Rr\}\,.	
	5\label{eq6.0015}
\end{align}	
and define the minima
\begin{equation*}
	\mu_J\,(L)=\min\,\{\, |x_d|: 
	\bfx\in\Lambda_{\theta}^{\natural}\cap\CCC_J(L)\,\}\,.
\end{equation*}

It is clear that, for $J=[d-1],\,\, \CCC_{[d-1]}(L)=\CCC(L)$ and 
$\mu_{[d-1]}\,(L)=\mu(L)$.

\begin{lemma}
	Let $L\in\ZZZ_J$, then $\mu_J\,(L)=\mu(L)$.
\end{lemma}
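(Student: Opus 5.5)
The plan is to prove the two inequalities $\mu_J(L)\le\mu(L)$ and $\mu_J(L)\ge\mu(L)$ separately. Both rest on the same structural fact about $\Lambda_\theta$: for each fixed $x_d\in\Zz$, the points of $\Lambda_\theta$ on the hyperplane $\{x_d=\text{const}\}$ form the full shifted lattice $\Zz^{d-1}+x_d\theta$. Consequently each coordinate $x_j$, $j\in[d-1]$, can be changed by an arbitrary integer without leaving $\Lambda_\theta^{\natural}$ and without changing $x_d$.

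\emph{The easy inclusion.} For $L\in\ZZZ_J$ we have $l_j=0$ for $j\in J'$, so $\CCC(L)$ imposes $|x_j|\le 1/2$ for these $j$, while $\CCC_J(L)$ imposes no condition on them. The constraints for $j\in J$ are identical in the two sets. Hence $\CCC(L)\subseteq\CCC_J(L)$, and a minimum over the larger set cannot be bigger, which gives $\mu_J(L)\le\mu(L)$.

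\emph{The reverse inequality.} Let $\bfx\in\Lambda_\theta^{\natural}\cap\CCC_J(L)$ attain $|x_d|=\mu_J(L)$; the minimum exists because $x_d$ ranges over nonzero integers. Build $\bfx'$ from $\bfx$ as follows.
\begin{itemize}
\item For each $j\in J'$, replace $x_j$ by $x_j-n_j$, where $n_j\in\Zz$ is a nearest integer to $x_j$. Then $|x'_j|=\langle x_j\rangle\le 1/2=2^{-l_j-1}$.
\item Keep $x'_j=x_j$ for $j\in J$ and keep $x'_d=x_d$.
\end{itemize}
By the structural fact, $\bfx'$ lies in the same shifted lattice $\Zz^{d-1}+x_d\theta$, so $\bfx'\in\Lambda_\theta^{\natural}$ since $x_d\ne0$. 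The coordinates with $j\in J$ still satisfy the $\CCC(L)$ constraints, so $\bfx'\in\CCC(L)$. Therefore $\mu(L)\le|x'_d|=\mu_J(L)$.

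There is no real obstacle in this argument. The only points needing care are that $l_j=0$ on $J'$ is exactly what makes the allowed bound equal to $1/2$, so that reduction modulo $\Zz$ suffices, and that the modified point keeps the same nonzero $x_d$. It is also worth noting that the choice of nearest integer is harmless at half-integers: the inequality $|x'_j|\le 1/2$ is non-strict, and for irrational $\theta_j$ the value $x_j=\theta_jx_d-m_j$ is never a half-integer anyway.
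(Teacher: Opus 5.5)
Your proof is correct and follows essentially the same route as the paper. For $\mu(L)\le\mu_J(L)$, you shift the $J'$-coordinates of a minimizing point by integers into $[-\frac12,\frac12]$, which the paper phrases via the fundamental set $[-\frac12,\frac12)^{|J'|}$ of $\Zz^{|J'|}$; for $\mu_J(L)\le\mu(L)$, you use the inclusion $\CCC(L)\subseteq\CCC_J(L)$ directly, which is slightly cleaner than the paper's observation that the shifted points $(X+Z,\mu(L))$ with $Z\neq0$ lie in $\CCC_J(L)$.
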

\begin{proof}

	Since $\mu_{[d-1]}\,(L)=\mu(L)$, we may assume that $J\subset [d-1]$ is a 
	proper 
	subset. 
	
	Let the 
	minimum $\mu_J\,(L)$ is achieved at a point 
	$\bfx=(X,\mu_J(L))\in\Lambda_{\theta}^{\natural}\cap\CCC_J(L)$.
	Consider the lattice points
	$(X+Z,\mu_J(L)),\,\, Z\in\Zz^{|J'|}$.
	Exactly one of these points belongs to $\CCC(L)$,
	since the cube $[-\frac12,\frac12)^{|J'|}$ is a fundamental set of $\Zz^{|J'|}$.
	Therefore, $\mu_J\,(L)\ge\mu(L)$.
	
	On the other hand, let the minimum $\mu\,(L)$ is achieved at a point 
	$\bfx=(X,\mu(L))\in\Lambda_{\theta}^{\natural}\cap\CCC(L)$.
	Consider the lattice points $(X+Z,\,\mu(L)),\,\, Z\in\Zz^{|J'|}\setminus\{0\}$.
	All these points belong to $\CCC_J(L)$. Therefore,  $\mu_J\,(L)\le\mu(L)$.
	This proves the lemma.
\end{proof}

A vector $\bfy\in\Rr^d_{>0}$ can be written as
		\begin{equation*}
			\bfy=\nu(\bfy)\,\,(2^{-\alpha_1},\dots,2^{-\alpha_d})
			\quad\text{with}\quad\alpha_1+\cdots +\alpha_d=0\, 
			\end{equation*}
and $\nu(\bfy)=(y_1\dots y_d)^{1/d}$.	
	The diagonal matrix 
	$D_{\bfy}=\text{diag}\,(2^{\,\alpha_1},\dots,2^{\,\alpha_d})\in\DDD$.
satisfies $D_{\bfy}\,\bfy=\nu(\bfy)\,(1,\dots,1)$.

We will call $\nu(\bfy)$ the \textit{hyperbolic norm} and  $D_{\bfy}$  the 
\textit{equalizer} of  $\bfy$.	
	
For each dyadic minimum $\mu(L)$, we associate a vector $\bfm(L)\in\Rr^d_{>0}$ 
defined by	
\begin{equation}
\bfm(L)=(\,2^{-l_1-1},\dots,2^{-l_{d-1}-1},\mu(L)\,)\,.
\label{eq6.0012}
\end{equation} 
The equalizer  
$D_{\bfm(L)}=\text{diag}\,(2^{\,\alpha_1},\dots,2^{\,\alpha_d})$
of $\bfm(L)$,
is determined by  the equations
\begin{equation}
\nu\,\,2^{-\alpha_j}=2^{-l_j-1},\, j\in[d-1], \quad
\nu\,\,2^{-\alpha_d}=\mu(L)\,,
\label{eq6.00012}
\end{equation}
where $\nu=\nu(\bfm(L))$ is defined by
\begin{equation}
(\nu)^d=(\nu(\bfm(L)))^d=\mu(L)\,\prod\nolimits_{j\in [d-1]}\,2^{-j_j-1}=
2^{1-d}\,\mu(L)\,2^{-|L|_1}\,,
\label{eq6.000012}
\end{equation}
where $|L|_1=l_1+\cdots+l_{d-1}$.
\\

Dyadic minima and multiple Diophantine approximations are intimately related. 
			\begin{proposition}
			 Let a set of numbers 
		$\theta = (\theta_1,\dots,\theta_{d-1})\in\Rr^{d-1}$  be
		$\kappa$-multiplicatively approximable.  Then	
		\begin{align}
		\mu(L)> \,c_1(\kappa)\,\, 2^{\frac{1}{\kappa +1}\, |L|_1}
		\label{eq6.012**}
	\end{align}
	and
	\begin{align}
		\lambda_1(\, D_{\bfm(L)}\,\Lambda_{\theta}\,)^d=\,\,
		\nu(\,\bfm(L)\,)^d> c_2(\kappa)\,\, 2^{-\frac{\kappa}{\kappa +1}\, |L|_1}
		\label{eq6.012*}	
			\end{align}
			with  positive  constants $c_1(\kappa)$ and $c_2(\kappa)$ depending 
			only on the constant $c(\kappa)$ in 
			\eqref{eq6.02*}. 
				\end{proposition}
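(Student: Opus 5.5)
Take a point $\bfx=(X,x_d)\in\Lambda_{\theta}^{\natural}\cap\CCC(L)$ at which the minimum \eqref{eq6.014*} is attained, so $m=|x_d|=\mu(L)$ and $|x_j|\le 2^{-l_j-1}$ for $j\in[d-1]$. Since $x_j=\theta_j m_d-m_j$, we have $\langle\theta_j m\rangle\le|x_j|\le 2^{-l_j-1}$. (By symmetry $\bfx\mapsto-\bfx$ we may take $x_d>0$.) Inserting this into \eqref{eq6.02*} gives
\begin{equation*}
c(\kappa)\le m^{1+\kappa}\prod\nolimits_{j\in[d-1]}\langle\theta_j m\rangle\le \mu(L)^{1+\kappa}\,2^{1-d}\,2^{-|L|_1}.
\end{equation*}
Hence $\mu(L)^{1+\kappa}\ge c(\kappa)2^{d-1}2^{|L|_1}$, and taking the $(1+\kappa)$-th root yields \eqref{eq6.012**} with $c_1(\kappa)=(c(\kappa)2^{d-1})^{1/(1+\kappa)}$, up to a harmless factor to make the inequality strict.

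For \eqref{eq6.012*}, the lower bound on $\nu(\bfm(L))^d$ follows immediately by substituting \eqref{eq6.012**} into \eqref{eq6.000012}:
\begin{equation*}
\nu^d=2^{1-d}\mu(L)2^{-|L|_1}> 2^{1-d}c_1(\kappa)\,2^{(\frac{1}{1+\kappa}-1)|L|_1}=c_2(\kappa)\,2^{-\frac{\kappa}{\kappa+1}|L|_1}.
\end{equation*}

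The remaining and main point is the identity $\lambda_1(D_{\bfm(L)}\Lambda_\theta)=\nu(\bfm(L))$, interpreted up to constants, which is all that is needed later. For the upper bound, the minimizing point $\bfx$ lies in the box with half-sides $\bfm(L)$. By \eqref{eq6.00012}, $D_{\bfm(L)}$ maps this box onto the cube $K_\nu$, so $|D_{\bfm(L)}\bfx|_\infty\le\nu$.

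For the lower bound, let $\bfy\in\Lambda_\theta\setminus\{0\}$ with $|D_{\bfm(L)}\bfy|_\infty<\nu$. Then $|y_j|<2^{-l_j-1}$ for $j\in[d-1]$ and $|y_d|<\mu(L)$. If $y_d\ne0$, then $\bfy\in\Lambda^\natural_\theta\cap\CCC(L)$ with $|y_d|<\mu(L)$, contradicting minimality. If $y_d=0$, then $Y\in\Zz^{d-1}\setminus\{0\}$ has some $|y_j|\ge1>2^{-l_j-1}$, again a contradiction. Thus $\lambda_1(D_{\bfm(L)}\Lambda_\theta)=\nu(\bfm(L))$ exactly. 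This also settles the lower bound in \eqref{eq6.012*}, since $\lambda_1^d=\nu^d$.

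The only delicate step is this exclusion argument, in particular the case $y_d=0$ and the handling of the boundary (non-strict) inequalities. The Diophantine input itself is a one-line substitution.
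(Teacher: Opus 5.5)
Your proof is correct and follows the paper's argument. You obtain the bound on $\mu(L)$ by plugging the minimizing point into the multiplicative approximability condition, exactly as the paper does; your derivation of the $\nu^d$ bound by substituting that estimate gives the same constant $c_2(\kappa)$ as the paper's factorization. The identity $\lambda_1(D_{\mathbf{m}(L)}\Lambda_\theta)=\nu(\mathbf{m}(L))$ comes from the same box-to-cube argument under the equalizer. Your explicit handling of the case $y_d=0$ fills in a detail the paper leaves implicit when it asserts that the box contains no nonzero lattice points in its interior.
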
	
				\begin{proof}
				Let $\bfx=(x_1,\dots,x_d)\in\Lambda_{\theta}^{\natural}\cap\CCC(L)$	
denote the point where the minimum \eqref{eq6.014*} is achieved.	Then, 			
$|x_j(l)|< 2^{-l_j-1}\le 1/2,\,j\in[d-1]$, and we can write	
$$|x_j|=\langle x_j(l) \rangle =\langle\, 
\theta_j\, \mu(L)\, \rangle <2^{-l_j-1},\quad j\in[d-1]\,.$$
Therefore, in view of \eqref{eq6.02*},  we have				
		\begin{align*}
			\mu(L)^{1+\kappa}\,\prod\nolimits_{j\in [d-1]}2^{-l_j-1}>
			\mu(L)^{1+\kappa}\,
			\prod\nolimits_{j\in [d-1]}\,\, \langle \, 
			\theta_{l}\,\, \mu(L)\,\rangle \ge c\,(\kappa)\, . 
		\end{align*}
	Hence, 
	$\mu(L)>c_1(\kappa)\,2^{\frac{1}{\kappa 
	+1}\, |L|_1}$
with $c_1(\kappa)=\big(c\,(\kappa)\,2^{d-1}\big)^{\frac{1}{1+\kappa}}$. 			
		This proves \eqref{eq6.012**}. 
		
		Similarly,
		\begin{align*}
		(\nu&(\bfm(L)))^d =\mu(L)\,\prod\nolimits_{j\in [d-1]}2^{-l_j-1}
		\notag
		\\
		&=\big(\mu(L)^{1+\kappa}\,
			\prod\nolimits_{j\in [d-1]}\,2^{-1-l_j}\big)^{\frac{1}{1+\kappa}}\,
		\big(\prod\nolimits_{j\in 
		[d-1]}\,2^{-1-l_j}\big)^{\frac{\kappa}{1+\kappa}}	
		\notag
		\\
			&\ge c_2(\kappa)\, 2^{-\frac{\kappa}{1+\kappa}\,|L|_1}\, . 
		\end{align*}	
		with
	$c_2(\kappa)=\big(c\,(\kappa)\,2^{-(d-1)\kappa}\big)^{\frac{1}{1+\kappa}}$.
		This proves the right inequality in \eqref{eq6.012*}.
		
		To prove the left equality in \eqref{eq6.012*}, we consider 
		the rectangular box 
		$$\Pi_L=\{\bfy: |y_j|\le 2^{-l_j-1},\, j\in[d-1],\,\,|y_d|\le \mu(L)\,.\}$$
	The box contains a lattice point $\bfx$ on its boundary and does not contain 
	points of $\Lambda_{\theta}\setminus \{0\}$	in its interior.
The equalizer $D_{\bfm(L)}$ transforms 
$\Pi_L$ to the cube 
$$[\,-\nu,\, \nu\,]^d=\{\bfy: |\bfy|_{\infty}\le\nu\},\quad
\nu=\nu(\bfm(L))\,.$$ 
This cube contains the point $D_{\bfm(L)}\bfx$ of the lattice 
$D_{\bfm(L)}\Lambda_{\theta}$
on its boundary and does not contain 
points of  $D_{\bfm(L)}\Lambda_{\theta}\setminus \{0\}$	in  its 
interior. This, together with \eqref{eq6.08}, proves the left equality in 
\eqref{eq6.012*}.			
			\end{proof}

\section{Dyadic decompositions of lattice sums.}
\label{sec4}

The dyadic decomposition of lattice sums is given below in Proposition 4.1. To 
formulate this result, we need special partitions of unity on the space $\Rr^d$ and 
special Fourier integrals. 

\textit{4.1. Partitions of unity.} 
Consider a partition of the real axis defined by
\begin{equation*}
	\Rr=\big(\,\bigsqcup\nolimits_{l\ge0} V_l\, \big)\,\,\bigsqcup\, V_{\infty}\, ,
	\end{equation*}
where $V_l=[-l-1, -l),\,l\in\Zz_{\ge0},$ and $V_{\infty}=[0,\infty)$. We write 
$\chi_l(x)=\chi(V_l,x),\,l\in\Zz_{\ge0},$ and $\chi_{\infty}(x),\,x\in\Rr,$ for 
the indicator functions of these intervals. Obviously,
$\chi_l(x)=\chi_0(x+l),\,l\in\Zz_{\ge0}$ and 
\begin{equation}
1=\sum\nolimits_{l\in\Zz_{\ge0}\cup\{\infty\}}\chi_l(x)=
\sum\nolimits_{l\in\Zz_{\ge0}}\chi_(x+l)+\chi_{\infty}(x),\quad	x\in\Rr\, .
	\label{eq6.014}
\end{equation}

We will define smoothed ``indicator functions'' as follows.
Let $\omega(\cdot)$ be a nonnegative $\CCC^{\infty}$ function 
supported on the interval $[-\frac12,\frac12]$ which satisfies 
$\int\nolimits_{\Rr}\,\omega(x)\,dx=1$. Consider the convolutions
\begin{align*}
	&\chi_l*\omega(x)=\int\nolimits_{\Rr}\,\chi_l(x)\omega(x-y)\,dy\, ,\quad
	l\in\Zz_{\ge0}, 
	\\ 
	&\chi_{\infty}*\omega(x)=\int\nolimits_{\Rr}\,\chi_{\infty}(x)\omega(x-y)\,dy.
\end{align*}

It is clear that $\chi_l*\omega(x)=\chi_0*\omega(x+l),\,l\in\Zz_{\ge0}$.
Moreover,
$\chi_0*\omega(x)$ and $\chi_{\infty}*\omega(x)$ are 
$\CCC^{\infty}$ functions supported on $[-2,1]$, and $[-1, \infty)$, respectively,
and $\chi_{\infty}*\omega(x)=1$ for $x\ge 1$.
It follows from \eqref{eq6.014} that
\begin{align}
   1=\sum\nolimits_{l\in\Zz_{\ge0}}\chi_0*\omega(x+l)+\chi_{\infty}*\omega(x)
	,\quad	x\in\Rr.
	\label{eq6.016*}
\end{align}
Thus, $\chi_0*\omega(x+l),\,
l\in\Zz_{\ge0},$ together with $\chi_{\infty}*\omega(x)$ form a partition 
of unity on $\Rr~$.

Let us introduce the functions
\begin{align*}
	\xi_0(x)=\chi_0*\omega\,\big(\,\log2|x|\,\big),\quad
	\xi_{\infty}(x)=\chi_{\infty}*\omega\,\big(\,\log2|x|\,\big)\,,
	\quad x\in\Rr\setminus\{0\}\,.
\end{align*}
Here and in what follows $\log$ denotes the logarithm to the base 2.

It is clear that $\xi_0(x)$ and $\xi_{\infty}(x)$ are nonnegative even 
$\CCC^{\infty}$ functions supported on
$[-1,-2^{-3}]\cup[2^{-3},1]$ and 
$(-\infty, -2^{-2}]\cup [2^{-2},\infty)$, respectively, and  $\xi_{\infty}(x)=1$ 
for $|x|\ge1$.
It follows from \eqref{eq6.016*} that
\begin{equation}
 1=	\sum\nolimits_{l\in\Zz_{\ge0}}\xi_0 (2^{l+1} x) +\xi_{\infty}(2x),\quad	
	x\in\Rr\setminus\{0\}\, .
	\label{eq6.0016**}
\end{equation}
Thus, $\xi_0(2^{l+1} x),\,l\in\Zz_{\ge0},$ together with $\xi_{\infty}(2 x)$ form 
a partition of unity on $\Rr\setminus\{0\}$. 
Note that for each $x\in\Rr$, at most three terms in the sum 
 \eqref{eq6.0016**} do not vanish.

Now we will define the  multidimensional partitions of unity as follows. 
For $J\subseteq [d-1],\,J' = [d-1] \setminus J, \, L\in\ZZZ_J$, and 
$X=(x_1,\dots,x_{d-1})\in\Rr^{d-1}$,
we define the functions 
\begin{equation*}
\xi_J(L,X)=
\prod\nolimits_{j\in J}\,\xi_0 (\,2^{l_j +1}\,x_j\,)\, 
\prod\nolimits_{j\in J'}\,\xi_{\infty}(\,2x_j\,).
\end{equation*}
We consider $\xi_J(L,\bfx)=\xi_J(L,X)$ as a function of
$\bfx=(X,x_d)\in\Rr^d$ independent of the coordinate $x_d$. 
From the above definitions and formulas, it follows that the functions
$\xi_J (L,\bfx)$ are supported on cylindrical subsets of the form
\begin{equation*}
	\prod\nolimits_{j\in J} [-2^{-l_j -1},-2^{-l_j-4}]\cup [2^{-l_j -4},2^{-l_j-1}]
	\times	\prod\nolimits_{j\in J'} [-\infty,-2^{-3}]\cup [2^{-3},\infty]
	\times\Rr \,.
\end{equation*}
This implies the following inclusions for the supports of  
$\xi_J (L,\bfx)$
\begin{equation}
	\text{support}\,\,\xi_J (\,L,\cdot\,)\subset\CCC_J(L) \,,
	\label{eq6.0016**aaa}
\end{equation}
where $\CCC_J(L)$ are the cylindrical subsets defined in \eqref{eq6.0015}.

Writing  \eqref{eq6.0016**} for $x=x_j,\,j\in [d-1],$ and multiplying 
these relations, we obtain\
\begin{equation}
	1=\sum\nolimits_{J\in [d-1]} \sum\nolimits_{L\in\ZZZ_J}\xi_J (L,\bfx),
	\quad	
	\bfx\in\Rr_{\star}^d \, ,
	\label{eq6.0016**a}
\end{equation}
where 
$$\Rr_{\star}^d=\big(\,\Rr\setminus\{0\}\,\big)^{d-1}\times\Rr=
\Rr^d\,\setminus\, \bigcup\nolimits_{j\in[d-1]}\,\{\bfx:x_j=0\}.$$
In other words, $\Rr_{\star}^d$ is  $\Rr^d$ where $d-1$ coordinate planes
$\{\bfx:x_j=0\},\,j\in [d-1],$ are removed.
Thus, $\xi_J(L,\bfx),\,J\subseteq [d-1],\,L\in\ZZZ_J,$  form 
a partition of unity on $\Rr_{\star}^d$. 
\\

\textit{4.2. Fourier integrals and dyadic decompositions.}
For a function $F(\bfx),\,\bfx\in\Rr^d,$ 
we have the decomposition
\begin{equation}
	F(\bfx)=\sum\nolimits_{J\in [d-1]} \sum\nolimits_{L\in\ZZZ_J} F_J (L,\bfx),
	\quad	
	\bfx\in\Rr_{\star}^d \, ,
	\label{eq6.0016**b}
\end{equation}
where $F_J(L,\bfx)=F(\bfx)\xi_J (L,\bfx)$. Note that for each 
$\bfx\in\Rr_{\star}^d$, at most $3^{d-1}$ terms in each of the sums  
\eqref{eq6.0016**a} and \eqref{eq6.0016**b} do not vanish.

For simplicity of notation, we do not specify $\phi\in\frak{S}(\Rr^d)$ in the 
following formulas, assuming it to be fixed. In particular, we will write
for the lattice sums 
$$S(\,\Lambda_{\theta},\phi,\bfu, T\,)=S(\,\Lambda_{\theta},\bfu, T\,)\,\, 
\text{and}\,\,
S(\,\Lambda_{\theta},\phi,T\,)=S(\,\Lambda_{\theta}, T\,).$$

For a subset 
$J\subseteq [d-1],\,\bfq\in\Rr_{>0}^d$ and 
$\phi\in\frak{S}(\Rr^d)$,  define
the Fourier integral 
\begin{equation}
	\FFF_J(\bfx,\bfq)=\int\nolimits_{\Rr^d}\,\, 
	\frac{e^{-2i\pi\,\bfx\centerdot\bfy}}{y_1\dots 
	y_d}\,\,\Xi_J(\bfy)\,\,
	\phi (\bfq\circ\bfy)\,\,d\bfy,
	\quad	
	\bfx\in\Rr^d \, ,
	\label{eq6.17}
\end{equation}
where 
\begin{equation}
	\Xi_J(\bfy)=\xi(Y)\,\xi(y_d)=
	\prod\nolimits_{j\in J}\,\xi_0 (\,y_j\,)\, 
	\prod\nolimits_{j\in[d]\setminus J}\xi_{\infty} (\,y_j\,)
	\label{eq6.17*}
\end{equation}
and  $\bfq\circ\bfy=(q_1x_1,\dots ,q_dx_d)$ for any two vectors 
$\bfq,\,\bfy\in\Rr^d$.

The cofactor $\Xi_J(\bfy)$ vanishes in a neighborhood of 
the singularities of integrand in \eqref{eq6.17}. Hence $\FFF_J(\bfx,\bfq)$ is a 
rapidly 
decreasing function of 
$\bfx\in\Rr^d$ as the Fourier transform of a function of class
$\frak{S}(\Rr^d)$.

For $\nu>0$,\,a lattice $\Gamma\in\LLL_d,\,\bfu\in\Rr^d$ 
and $\bfq\in\Rr^d_{>0}$ we define
\begin{equation}
\FFF_J(\,\nu,\Gamma,\bfu, \bfq\,)=	\sum\nolimits_{\bfx\in\Gamma } 
	\,\, 
	\FFF_J\big(\,\nu (\bfx-\bfu),\bfq\,\big)
	\label{eq6.18*}
\end{equation}
and
\begin{equation}
	\FFF_J(\,\nu,\Gamma,\bfq\,)=\sup\nolimits_{\bfu\in\Rr^d}\, 
	|\,\FFF_J(\,\nu, \Gamma,\bfu,\bfq\,)\,| \, .
	\label{eq6.018*}	
\end{equation} 


\begin{proposition}
 Let a set of numbers 
$\theta = (\theta_1,\dots,\theta_{d-1})$  be
$\kappa$-multiplicatively approximable. 
Then, we have the following.

\textit{(i)}
The lattice sum \eqref{eq6.06*} satisfies the \textbf{dyadic decomposition}
\begin{equation}
S(\,\Lambda_{\theta},\bfu, T\,)=\sum\nolimits_{J\in [d-1]} \,
\sum\nolimits_{L\in\ZZZ_J} 
S_J(\,L,\bfu, T\,)
	\label{eq6.18}
\end{equation}
where
\begin{equation}
	S_J(\,L,\bfu, T\,)=
\FFF_J\big(\,\nu(\bfm(L)),\,
D^{-1}_{\bfm(L)}\,\Lambda^{\perp}_{\theta},\,D^{-1}_{\bfm(L)}
\bfu,\,T^{-1}\bfm(L)\,\big),
	\label{eq6.18*q}
\end{equation}	

\textit{(ii)}
The supremum \eqref{eq6.07} satisfies the bound
\begin{equation}
	S(\,\Lambda_{\theta}, T\,)\le\sum\nolimits_{J\in [d-1]}\, 
	\sum\nolimits_{L\in\ZZZ_J} 
	S_J(\,L,\Lambda_{\theta}, T\,)\, ,
	\label{eq6.18**}	
\end{equation}
where
\begin{equation*}
	S_J(\,L,\Lambda_{\theta},T\,)=\FFF_J(\,\nu(\bfm(L)),\,
	D^{-1}_{\bfm(L)}\,\Lambda^{\perp}_{\theta},\,T^{-1}\bfm(L)\,)\, .
\end{equation*}
In these formulas, $\bfm(L)$ is the vector \eqref{eq6.0012}, $\nu(\bfm(L))$ and
$D_{\bfm(L)}$ are its hyperbolic norm and equalizer, respectively.
\end{proposition}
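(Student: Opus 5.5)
The plan is to insert the partition of unity \eqref{eq6.0016**a} into the lattice sum \eqref{eq6.06*}, add for free a cutoff in the $x_d$ variable using Lemma~3.1, and then apply Poisson summation to each piece, rescaling by the vector $\bfm(L)$.

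\emph{Step 1: splitting the sum.} Every $\bfx\in\Lambda_\theta^{\natural}$ has $x_d\ne0$. Since the $\theta_j$ are irrational (a $\kappa$-multiplicatively approximable $\theta$ has irrational coordinates), each $x_j=\theta_jx_d-m_j\ne0$, so $\Lambda_\theta^{\natural}\subset\Rr^d_\star$. Multiplying the summand by \eqref{eq6.0016**a} gives
\begin{equation*}
S(\Lambda_\theta,\bfu,T)=\sum\nolimits_{J}\sum\nolimits_{L\in\ZZZ_J}\,\sum\nolimits_{\bfx\in\Lambda_\theta^{\natural}}\frac{e^{2i\pi\bfu\centerdot\bfx}}{x_1\cdots x_d}\,\xi_J(L,\bfx)\,\phi(T^{-1}\bfx).
\end{equation*}
Interchanging the sums is legitimate: the whole series converges absolutely by Remark~(i) after Theorem~1.2, and at most $3^{d-1}$ terms of the partition are nonzero at each point.

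\emph{Step 2: the cutoff in $x_d$.} By \eqref{eq6.0016**aaa}, a point of $\Lambda_\theta^{\natural}$ with $\xi_J(L,\bfx)\ne0$ lies in $\CCC_J(L)$, so $|x_d|\ge\mu_J(L)=\mu(L)$ by Lemma~3.1. Since $\xi_\infty(y)=1$ for $|y|\ge1$, we may insert the factor $\xi_\infty(x_d/\mu(L))$ without changing anything. This factor vanishes for $|x_d|<\mu(L)/4$, in particular at $x_d=0$. Hence the inner sum can be taken over all of $\Lambda_\theta$, because the hyperplane $x_d=0$ contributes zero.

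Now write $\bfy=\bfm(L)^{-1}\circ\bfx$, so that $y_j=2^{l_j+1}x_j$ for $j\in J$, $y_j=2x_j$ for $j\in J'$ (where $l_j=0$), and $y_d=x_d/\mu(L)$. Then
\begin{equation*}
\xi_J(L,\bfx)\,\xi_\infty(x_d/\mu(L))=\Xi_J(\bfy).
\end{equation*}
The inner sum becomes $\sum_{\bfx\in\Lambda_\theta}G(\bfx)$, where
\begin{equation*}
G(\bfx)=\frac{e^{2i\pi\bfu\centerdot\bfx}}{x_1\cdots x_d}\,\Xi_J(\bfm(L)^{-1}\circ\bfx)\,\phi(T^{-1}\bfx).
\end{equation*}
The function $G$ is in $\frak{S}(\Rr^d)$: the factor $\Xi_J$ kills a neighbourhood of each hyperplane $x_j=0$, $j\in[d]$, and its derivatives are bounded, so the decay comes from $\phi$.

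\emph{Step 3: Poisson summation and rescaling.} Since $\Lambda_\theta$ is unimodular, Poisson summation gives $\sum_{\bfx\in\Lambda_\theta}G(\bfx)=\sum_{\bfz\in\Lambda_\theta^\perp}\widehat G(\bfz)$. In $\widehat G(\bfz)$ substitute $\bfx=\bfm(L)\circ\bfy$. The Jacobian is $\prod_k m_k(L)=\nu^d$, and it cancels exactly the factor $\nu^{d}$ coming from
\begin{equation*}
x_1\cdots x_d=\nu^d\,y_1\cdots y_d.
\end{equation*}
The result is
\begin{equation*}
\widehat G(\bfz)=\FFF_J\big(\bfm(L)\circ(\bfz-\bfu),\,T^{-1}\bfm(L)\big).
\end{equation*}
By \eqref{eq6.00012} we have $m_k(L)=\nu\,2^{-\alpha_k}$, so $\bfm(L)\circ\bfw=\nu\,D_{\bfm(L)}^{-1}\bfw$. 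Summing over $\bfz\in\Lambda_\theta^\perp$ therefore gives exactly $\FFF_J(\nu,D^{-1}_{\bfm(L)}\Lambda_\theta^\perp,D^{-1}_{\bfm(L)}\bfu,T^{-1}\bfm(L))$ as defined in \eqref{eq6.18*}. This proves \eqref{eq6.18} and \eqref{eq6.18*q}, up to fixing the sign convention for $\bfu$, which is harmless because of the supremum over $\bfu$.

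\emph{Step 4: part (ii).} Apply the triangle inequality to \eqref{eq6.18}. As $\bfu$ ranges over $\Rr^d$, so does $D^{-1}_{\bfm(L)}\bfu$, so each term is bounded by $S_J(L,\Lambda_\theta,T)$, and taking the supremum gives \eqref{eq6.18**}.

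The delicate point is Step 2. Inserting the $x_d$-cutoff relies essentially on Lemma~3.1, which guarantees $|x_d|\ge\mu(L)$ on the support of $\xi_J(L,\cdot)$. Without this cutoff, $G$ would be singular at $x_d=0$ and Poisson summation could not be applied. One must also check carefully that the normalisations of $\xi_0$ and $\xi_\infty$ in $\Xi_J$ match the scalings $2^{l_j+1}$, $2$ and $\mu(L)^{-1}$.
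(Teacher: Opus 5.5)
Your proposal is correct and follows essentially the same route as the paper. The shared steps are: insert the partition of unity and use absolute convergence to interchange the sums; add the cutoff $\xi_\infty(\mu(L)^{-1}x_d)$ via Lemma~3.1; extend the sum to all of $\Lambda_\theta$; apply Poisson summation; rescale with $\bfx=\bfm(L)\circ\bfy$ and $\bfm(L)\circ\bfw=\nu\,D^{-1}_{\bfm(L)}\bfw$; and finish (ii) with the triangle inequality. Your computation already gives $D^{-1}_{\bfm(L)}\bfu$ with exactly the right sign, so the hedge about a sign convention for $\bfu$ is unnecessary, and your observation that $\kappa$-multiplicative approximability forces every $\theta_j$ to be irrational usefully fills in a point the paper only asserts.
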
	
\begin{proof}
	Note that $\Lambda_{\theta}^{\natural}\in\Rr_{\star}^d$, because
all numbers	$\theta_{1},\dots,\theta_{d}$ are irrational. 	
Applying decomposition \eqref{eq6.0016**b} to
$F(\bfx,\bfu,T)=\frac{e^{2i\pi\,\bfu\centerdot\bfx}}{x_1\dots 
x_d}\,\,\phi\,(T^{-1}\bfx)$,	
we obtain
\begin{equation}
 	S(\,\Lambda_{\theta}, \bfu, 
 	T\,)=\sum\nolimits_{\bfx\in\Lambda_{\theta}^{\natural}}\, 
 	\sum\nolimits_{J\in [d-1]}\, 
 	\sum\nolimits_{L\in\ZZZ_J}\,F_J(\,L,\bfx,\bfu,T\,)\, ,
	\label{eq6.018**}	
\end{equation}
where $F_J(\,L,\bfx,\bfu,T\,)=F(\bfx,\bfu, T)\xi_J (L,X)$.
With the hypothesis of Proposition~4.1, the series \eqref{eq6.018**} 
converges absolutely and uniformly.
Indeed, since $\xi_J (L,X)$ is nonnegative, we have
\begin{align*}
	&|\sum\nolimits_{\bfx\in\Lambda_{\theta}^{\natural}}\, 
	\sum\nolimits_{J\in [d-1]}\, 
	\sum\nolimits_{L\in\ZZZ_J}\,F_J(\,L,\bfx,\bfu,T\,)|
	\notag
	\\
	&\le\sum\nolimits_{\bfx\in\Lambda_{\theta}^{\natural}}\, 
	\sum\nolimits_{J\in [d-1]}\, 
	\sum\nolimits_{L\in\ZZZ_J}\,|F(\,\bfx,\bfu,T\,)|\,\xi_J(L,X)
	\notag
	\\
	&=\sum\nolimits_{\bfx\in\Lambda_{\theta}^{\natural}}\, 
	\,|F(\,\bfx,\bfu,T\,)|\quad\text{(\, by \eqref{eq6.0016**a} and 
	\eqref{eq6.0016**b}\,)}
	\notag
	\\
	&=\sum\nolimits_{\bfx\in\Lambda_{\theta}^{\natural}}\, 
	\,\frac{1}{|x_1\dots 
		x_d|}\,\,|\phi\,(T^{-1}\bfx)|
	\notag
	\\
	&= \,\sum_{m_d\ne0}\, 
	\sum_{m_1,\dots,m_{d-1}\in\Zz}
	\frac{1}{|m_d\,(\theta_{l}m_d -m_l),\dots,(\theta_{l}m_d -m_l)|}\,\, |\varphi 
	(T^{-1}\bfm)|
	\notag
	\\
	&=O_{\theta,\phi}(\,T^{d+\kappa}\,)\,,\quad\text{(\, just like in
	\eqref{eq6.00a}\,)}
\end{align*}
where $\varphi(\bfm)=\phi(\,\theta_{l}m_d -m_l,\dots,\theta_{l}m_d 
-m_l,\,m_d\,)\in\frak{S}(\Rr^d)$.

Therefore, the order of summation in
 \eqref{eq6.018**} can be exchanged to write
\begin{equation*}
	S(\,\Lambda_{\theta}, \bfu, T\,)= 
	\sum\nolimits_{J\in [d-1]}\, 
	\sum\nolimits_{L\in\ZZZ_J}\,S_J(\,L,\Lambda_{\theta},\bfu,T\,)\, ,
\end{equation*}
where
\begin{equation}
	S_J(\,L,\Lambda_{\theta},\bfu, 
	T\,)=\sum\nolimits_{\bfx\in\Lambda_{\theta}^{\natural}}\, 
	\,F_J(\,L,\bfx,\bfu, T\,)\, .
	\label{eq6.0018**a}	
\end{equation}

The next observation is significant for our construction.
By \eqref{eq6.0016**aaa}, $F_J(\,L,\bfx,\bfu,T\,)$ as a function of $\bfx$ 
is supported in $\CCC_J(L)$. Each point 
$\bfx\in\Lambda_{\theta}^{\natural}\cap\CCC_J(L)$ satisfies $|x_d|\ge\mu(L)$
by Lemma~3.1 and the definition of dyadic minima. 
Recall that $\xi_{\infty}(x)=1$ for $|x|\ge1$. Therefore,  
\eqref{eq6.0018**a} can be written as
\begin{equation}
	S_J(\,L,\Lambda_{\theta},\bfu, 
	T\,)=\sum\nolimits_{\bfx\in\Lambda_{\theta}^{\natural}}\, 
	\,Q_J(\,L,\bfx,\bfu,T\,)\, ,
	\label{eq6.0018**aa}	
\end{equation}
where
\begin{align*} 
Q_J(\,L,\bfx,\bfu,T\,)&=F_J(\,L,\bfx,\bfu,T\,)\,
\xi_{\infty} (\,\mu(L)^{-1}\,x_d\,)
\notag
\\
&=\frac{e^{2i\pi\,\bfu\centerdot\bfx}}{x_1\dots 
	x_d}\,\,\xi_J (L,X)\,\,\xi_{\infty} 
	(\,\mu(L)^{-1}\,x_d\,)\,\,\phi\,(T^{-1}\bfx)\,.
\end{align*}
Furthermore, summation in \eqref{eq6.0018**aa} can be extended to  the entire 
lattice $\Lambda_{\theta}$:
\begin{equation}
	S_J(\,L,\Lambda_{\theta},\bfu, T\,)=\sum\nolimits_{\bfx\in\Lambda_{\theta}}\, 
	\,Q_J(\,L,\bfx,\bfu,T\,)\, ,
	\label{eq6.0018**aaa}	
\end{equation}
since $\xi_{\infty} (\,\mu(L)^{-1}\,x_d\,)$ vanishes in a neighborhood of
the plane $\{\,\bfx: x_d=0\,\}$.
Therefore, the Poisson summation formula, see \cite[Chap. VII, Sec. 2]{27}, can be 
applied to \eqref{eq6.0018**aaa}. This gives
\begin{equation}
	S_J(\,L,\Lambda_{\theta},\bfu, T\,)=\sum\nolimits_{\bfx\in\Lambda_{\theta}}\, 
	\,Q_J(\,L,\bfx,\bfu,T\,)=\sum\nolimits_{\bfx\in\Lambda_{\theta}^{\perp}}\, 
	\,\widetilde Q_J(\,L,\bfu-\bfx,T\,)\, ,
	\label{eq6.0018***aaa}	
\end{equation}
where
\begin{align*} 
	\,&\widetilde Q_J(\,L,\bfu-\bfx,T\,)=\int\nolimits_{\Rr^d}\,
	e^{2i\pi\,(\bfu-\bfx)\centerdot\bfz}\,Q_J(\,L,\bfz,\bfu,T\,)\,d\bfz
	=\int\nolimits_{\Rr^d}
	\frac{e^{2i\pi\,(\bfu-\bfx)\centerdot\bfz}}{z_1\dots z_d}\,\,
		\notag
		\\
		&\times\prod\nolimits_{j\in J}\,\xi_0 (\,2^{l_j+1}z_j\,)\, 
		\prod\nolimits_{j\in J'}\xi_{\infty} (2z_j)
		\,\,\xi_{\infty} 
	(\,\mu(L)^{-1}\,z_d\,)\,\,\phi\,(T^{-1}\bfz)\,d\bfz\,.
\end{align*}
In this integral we put 
$$2^{l_j+1}z_j=y_j,\,\text{if}\,\,j\in 
J,\,\,2z_j=y_j,\,\text{if}\,\,j\in 
J',\,\,\mu(L)^{-1}z_d=y_d.$$ 
By \eqref{eq6.00012} and \eqref{eq6.000012}, 
$(\bfu-\bfx)\centerdot\bfz=
\nu(\bfm(L))\,(D_{\bfm(L)}^{-1}\,\bfu-D_{\bfm(L)}^{-1}\,\bfx)\centerdot\bfy$. 
Therefore,
\begin{align*} 
	\,&\widetilde Q_J(\,L,\bfu-\bfx, T\,)
	=\int\nolimits_{\Rr^d}
	\frac{e^{2i\pi\,\nu(\bfm(L))
	\big(D_{\bfm(L)}^{-1}\bfu-D_{\bfm(L)}^{-1}\bfx\big)\,\centerdot\,\bfy}}{y_1\dots
	 y_d}\,\,
	\notag
	\\
	&\times\prod\nolimits_{j\in J}\,\xi_0 (y_j)\, 
	\prod\nolimits_{j\in [d]\setminus J}\xi_{\infty} (y_j) 
	\,\,\phi\,(T^{-1}\bfm(L)\circ\bfy)\,d\bfy
	\notag
	\\
	&=\FFF_J\big(\,\nu(\bfm(L))\,
	(D^{-1}_{\bfm(L)}\bfx-D^{-1}_{\bfm(L)}\,\bfu),T^{-1}\bfm(L)\,\big)
\end{align*}
Substituting this formula to \eqref{eq6.0018***aaa},
we obtain \eqref{eq6.18} and \eqref{eq6.18*q}.

The inequality \eqref{eq6.18**} follows from \eqref{eq6.18} and \eqref{eq6.018*}.
\end{proof}

\section{Fourier integrals and lattice sums}
\label{sec5}
In this section we will estimate the supremum norms \eqref{eq6.018*} 
of the lattice sums \eqref{eq6.18*} with arbitrary unimodular lattices.
\begin{proposition}
Let $\nu>0,$ a lattice $\Gamma\subset\LLL_d,\,\bfq \in \Rr^d_{>0}$ and  
$J\subseteq [d-1]$ Then, we have
\begin{equation}
\FFF_J(\,\nu,\,\Gamma,\,\bfq)\le C_{a,\phi}\,\big(\,\nu^{-d} 
+\lambda_1\,(\Gamma^{\perp})^{-d}\,\big)\,\, \Upsilon_J (\bfq),	
\label{eq6.18bbb}
	\end{equation}
	where
\begin{equation}
	 \Upsilon_J (\bfq)=
\prod\nolimits_{j\in J}\frac{1}{(1+q_j)^a}\,\,
\prod\nolimits_{j\in [d]\setminus J}\frac{\log\, (\,2+q_j^{-1})\,}{(1+q_j)^a}	
	\label{eq6.18bbbb}
\end{equation}
with an arbitrary fixed $a>1$ and a constant $C_{a,\phi}$ depending only on $a$ and 
$\phi\in\frak{S}$.	
	\end{proposition}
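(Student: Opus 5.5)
The plan is to reduce the bound to Proposition 2.1(ii). The reduction rests on one pointwise estimate: for every $A>d$ and every $a>1$,
\begin{equation*}
|\FFF_J(\bfx,\bfq)|\le C_{A,a,\phi}\,(1+|\bfx|_{\infty})^{-A}\,\Upsilon_J(\bfq),\qquad \bfx\in\Rr^d ,
\end{equation*}
uniformly in $\bfq\in\Rr^d_{>0}$. Once this is available, we sum over $\bfx\in\Gamma$ in \eqref{eq6.18*}. This gives $|\FFF_J(\nu,\Gamma,\bfu,\bfq)|\le C\,\Upsilon_J(\bfq)\,f_A(\nu,\Gamma,\bfu)$. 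Taking the supremum over $\bfu$ and applying Proposition 2.1(ii) then yields \eqref{eq6.18bbb}.

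To prove the pointwise estimate, note that the integrand of \eqref{eq6.17} factors only partly. The weight $\Xi_J(\bfy)/(y_1\cdots y_d)$ is a product of one-variable functions $g_j(y_j)$, where $g_j=\xi_0(y)/y$ for $j\in J$ and $g_j=\xi_\infty(y)/y$ otherwise. The factor $\phi(\bfq\circ\bfy)$ does not factor. Decay in $\bfx$ comes from integrating by parts. Multiplying by $(2i\pi x_k)^{b}$ corresponds to applying $\partial_{y_k}^{b}$ to $g_k(y_k)\,\phi(\bfq\circ\bfy)$. A derivative landing on $\phi$ produces a factor $q_k$ together with derivatives of $\phi$. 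A derivative landing on $g_k$ produces a function bounded by $|y_k|^{-1-b'}$. On $\text{supp}\,\xi_\infty$ this is integrable once $b'\ge1$, and on $\text{supp}\,\xi_0$ it is harmless. It is therefore enough to bound, for each multi-index $\bfb$ with $|\bfb|\le A$, the absolute integral
\begin{equation*}
\int_{\Rr^d}\prod_{k}\big|\partial^{b_k'}g_k(y_k)\big|\;q^{\bfb''}\,\big|(\partial^{\bfb''}\phi)(\bfq\circ\bfy)\big|\,d\bfy .
\end{equation*}
We dominate $|\partial^{\bfb''}\phi(\bfz)|$ by the product $\prod_k(1+|z_k|)^{-a-1}$, which is allowed since $\phi\in\frak{S}$. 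After that the integral splits into one-dimensional integrals in $y_k$.

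The core of the argument consists of the one-dimensional estimates:
\begin{align*}
&\int_{2^{-3}\le|y|\le1}\frac{dy}{|y|}\,(1+q|y|)^{-a-1}\ll(1+q)^{-a-1},\\
&\int_{|y|\ge1/4}\frac{dy}{|y|}\,(1+q|y|)^{-a-1}\ll\frac{\log(2+q^{-1})}{(1+q)^{a}} .
\end{align*}
The second estimate is the main point of the proof. For $q\le1$ the integral over $1/4\le|y|\le q^{-1}$ contributes $\log(2+q^{-1})$, and the tail beyond $q^{-1}$ contributes $O(1)$. For $q\ge1$ the integral is $\ll q^{-a-1}\int_{|y|\ge 1/4}|y|^{-a-2}\,dy\ll q^{-a-1}$. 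When $b_k''\ge1$ there is an extra factor $q_k^{b_k''}$. In this case we use $\int(1+q|y|)^{-a-1-b''}\,dy\ll q^{-1}$ and $q^{b''}(1+q|y|)^{-b''}\le|y|^{-b''}$ to absorb it, so the same bounds survive up to constants.

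The main obstacle is to carry out the integrations by parts while keeping the bound uniform in $\bfq$. The difficulty is the $\xi_\infty$ directions, where $1/y_k$ is not integrable at infinity. When $q_k$ is small, the only decay in $y_k$ comes from $\phi(\bfq\circ\bfy)$, and this decay sets in only on the scale $|y_k|\gtrsim q_k^{-1}$. This is exactly where the logarithmic factor $\log(2+q_k^{-1})$ comes from, and it cannot be improved. A derivative hitting $g_k$ always gains an extra power of $|y_k|^{-1}$, so those terms are better behaved. With these observations the product of the one-dimensional bounds gives the factor $\Upsilon_J(\bfq)$, and the estimate $(1+|\bfx|_\infty)^{-A}$ follows by choosing, for each $\bfx$, the coordinate $k$ with $|x_k|=|\bfx|_\infty$ and applying $A$ derivatives in $y_k$.
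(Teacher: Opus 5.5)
Your proof is correct, but it takes a different route from the paper's.

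The paper never estimates $\FFF_J(\bfx,\bfq)$ directly. It inserts the product weight $\psi(\bfq\circ\bfy)=\prod_j(1+q_j^2y_j^2)^{-a/2}$ together with its reciprocal. It then writes $\FFF_J(\cdot,\bfq)=\HHH_J(\cdot,\bfq)*\GGG(\cdot,\bfq)$, where $\GGG(\cdot,\bfq)$ is the Fourier transform of $\psi(\bfq\circ\bfz)^{-1}\phi(\bfq\circ\bfz)$. By scaling, $\|\GGG(\cdot,\bfq)\|_{L^1}$ does not depend on $\bfq$. Taking the supremum over shifts $\bfu$ then gives $\FFF_J(\nu,\Gamma,\bfq)\le\GGG\,\HHH_J(\nu,\Gamma,\bfq)$ (Lemma 5.1). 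The integral $\HHH_J$ factors exactly into the one-dimensional integrals $h_0,h_\infty$, which are bounded by one-variable integration by parts (Lemma 5.2). Proposition 2.1 then finishes the argument, as in your proof.

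You instead integrate by parts in $\FFF_J$ itself and factor only afterwards. You do this by dominating
$|\partial^{\bfb}\phi(\bfz)|\le C_N(1+|\bfz|_\infty)^{-dN}\le C_N\prod_k(1+|z_k|)^{-N}$ inside the absolute integral. Taking $N$ large also covers your absorption step $q^{b''}(1+q|y|)^{-b''}\le|y|^{-b''}$, and your one-dimensional estimates are right.

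Your route is shorter and needs no auxiliary function or convolution. It also yields a pointwise bound for $\FFF_J(\bfx,\bfq)$ itself, with an arbitrary power of $(1+q_j)^{-1}$. The paper's route gathers all dependence on $\phi$ into the single $\bfq$-independent constant $\GGG$. It also reduces the analysis to genuinely one-dimensional oscillatory integrals. The cost is the extra convolution lemma, which fits naturally because the quantity being bounded is already a supremum over $\bfu$.
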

	\begin{proof} Let us introduce the following Fourier integral
\begin{equation}
	\HHH_{J}(\bfx, \bfq)=\int\nolimits_{\Rr^d}\,\, 
	\frac{e^{-2i\pi\,\bfx\centerdot\bfy}}{y_1\dots 
		y_d}\,\,\Xi_J(\bfy)\,\,
	\psi\, (\bfq\circ\bfy)\,\,d\bfy,
	\quad	
	\bfx\in\Rr^d \, ,
	\label{eq6.18*bw}
\end{equation}
where $\Xi_J(\bfy)$	is defined in \eqref{eq6.17*} and 
\begin{equation*}
\psi\, (\bfq\circ\bfy)=\prod\nolimits_{j\in [d]} (1\,+\,q_j^2\,\, |y_j|^2)^{-a/2}	
	\end{equation*}
where $a>1$ is a fixed number. 
	
	$\HHH_{J}(\bfx,\bfq)$ as a function of $\bfx$ is the Fourier transform of a 
	$\CCC^{\infty}$	
	function, such that it and all its derivatives are integrable. Hence, 
	$\HHH_{J}(\bfx,\bfq)$ is a continuous function and 
	$\HHH_{J}(\bfx,\bfq)=O_a(|\bfx|_{\infty}^{-A})$ as $\bfx\to\infty,$ with an 
	arbitrary 	large $A>0$. Define 
\begin{equation*}
	\HHH_{J}(\,\nu,\Gamma, \bfu,\bfq\,)=\sum\nolimits_{\bfx\in\Gamma } 
	\,\, 
	\HHH_{J}\big(\,\nu (\bfx-\bfu), \bfq\,\big)
\end{equation*}
and
\begin{equation*}
	\HHH_{J}(\,\nu,\Gamma,\bfq\,)=\sup\nolimits_{\bfu\in\Rr^d}\, 
	|\,\HHH_{J}(\,\nu, \Gamma, \bfu,\bfq\,)\,| \, .
\end{equation*} 	
	
\begin{lemma} We have
\begin{equation*}
	\FFF_J(\,\nu,\,\Gamma,\bfq\,)\le 
	\GGG\,\,\HHH_{J}(\,\nu,\Gamma,\bfq\,)\,,
\end{equation*}
where
\begin{equation*}
\GGG=\int\nolimits_{\Rr^d}\,
\Big|\int\nolimits_{\Rr^d}\,e^{-2i\pi\,\bfx\centerdot\bfy}\,\,
\prod\nolimits_{j\in [d]} (1\,+\, |y_j|^2)^{a/2}
\,\,\phi(\bfy)\,\,d\bfy\, \Big|\,d\bfx <\infty\,.
\end{equation*}	
	\end{lemma}	
\begin{proof} Let us put
\begin{equation*}
	\FFF_J(\bfx,\bfq)=\int\nolimits_{\Rr^d}\,\, 
	\frac{e^{-2i\pi\,\bfx\centerdot\bfy}}{y_1\dots 
		y_d}\,\,\Xi_J(\bfy)\,\,\psi\,(\bfq\circ\bfy)\,\,\psi\,(\bfq\circ\bfy)^{-1}
	\phi(\bfq\circ\bfy)\,\,d\bfy\,.
\end{equation*}
Write this Fourier integral as a convolution:	
\begin{equation}
	\FFF_J(\bfx,\bfq)=\int\nolimits_{\Rr^d}\,\, \HHH_{J}(\,\bfx-\bfy,\bfq\,)\,\,
	\GGG(\,\bfy,\bfq\,)\,\,d\bfy
	\label{eq6.18ww}
\end{equation}
where
\begin{equation*}
	\GGG(\,\bfy,\bfq\,)=\int\nolimits_{\Rr^d}\,\, e^{-2i\pi\,\bfy\centerdot\bfz}
		\,\,\psi\,(\bfq\circ\bfz)^{-1}\,\,
	\phi(\bfq\circ\bfz)\,\,d\bfz,
\end{equation*}
Replacing $\bfx$ with $\nu\bfx-\nu\bfu$ in \eqref{eq6.18ww} and summing over 
$\bfx\in\Gamma$, we obtain
\begin{equation*}
	\FFF_J(\,\nu,\Gamma,\bfu,\bfq\,)=\int\nolimits_{\Rr^d}\,\, 
	\HHH_{J}(\,\nu,\Gamma,-\nu\bfu-\bfy,\,\bfq\,)\,\,
	\GGG_{J}(\bfy,\bfq)\,\,d\bfy\, .
\end{equation*}
Now, we have
\begin{equation*}
	\FFF_J(\,\nu,\,\Gamma,\bfq\,)\le 
	\GGG(\bfq)\,\,\HHH_{J}(\,\nu,\Gamma,\bfq\,)
\end{equation*}
where
\begin{equation*}
	\GGG(\bfq)=\int\nolimits_{\Rr^d}\,
	\Big|\int\nolimits_{\Rr^d}\,\, e^{-2i\pi\,\bfy\centerdot\bfz}
	\,\,\psi\,(\bfq\circ\bfz)^{-1}\,
	\phi(\bfq\circ\bfz)\,\,d\bfz\,\Big|\, d\bfy = \GGG\, ,
\end{equation*}
because this integral does not depend on $\bfq\in\Rr_{>0}^d$.
\end{proof}	

The integral \eqref{eq6.18*bw} is equal to the product of one-dimensional 
integrals:	
\begin{equation}		
\HHH_{J}(\bfx,\bfq)=\prod\nolimits_{j\in J}\,h_0(x_j,q_j)	\,
\prod\nolimits_{j\in [d]\setminus J}\,h_{\infty}(x_j,q_j)\, ,	
\label{eq6.18*b}
\end{equation}		
where		
\begin{align*}
	h_0(x,q)&=\int\nolimits_{\Rr}\,\, 
	\frac{e^{-2i\pi\,xy}}{y}\,\,\xi_0(y)\,\,
	(1\,+\,q^2\,\, |y|^2)^{-a/2}\,\,d y\, ,
	\notag
	\\
	h_{\infty}(x,q)&=\int\nolimits_{\Rr}\,\, 
	\frac{e^{-2i\pi\,xy}}{y}\,\,\xi_{\infty}(y)\,\,
	(1\,+\,q^2\,\, |y|^2)^{-a/2}\,\,d y\,.
\end{align*}

\begin{lemma} For $x\in\Rr$ and $q>0$, we have
\begin{align*}
	|\,h_0(x,q)\,|&\le C_{a,A}\,\,\frac{1}{(1\,+\,q\,)^{a}}\,\,
	(1\,+\,|x|\,)^{-A},
	\\
	|\,h_{\infty}(x,q)\,|&\le C_{a,A}\,\,\frac{\log\,(2+q^{-1})}
	{(1\,+\,q\,)^{a}}\,\,
	(1\,+\,|x|\,)^{-A}
\end{align*}
with an arbitrary large $A>1$.
\end{lemma}
\begin{proof}
Recall that $\xi_0(x)$ and $\xi_{\infty}(x)$ are  
$\CCC^{\infty}$ functions supported on
$$[-1,-2^{-3}]\cup[2^{-3},1]\quad \text{and}\quad 
(-\infty, -2^{-2}]\cup [2^{-2},\infty),$$ respectively, and  $\xi_{\infty}(x)=1$ 
for $|x|\ge1$. 

Let us prove the first bound. 
Integrating 
$K\ge 0$ times by parts, we obtain
\begin{align*} 
	 (2i\pi\,x)^K\,h_0(x,q)&=\int\nolimits_{\Rr}\,\, 
	e^{-2i\pi\,xy}\,\,\big(\frac{d}{dy}\big)^K\,\Big(
	\frac{\xi_0(y)}{y}\,
	(1\,+\,q^2\,\, |y|^2)^{-a/2}\Big)\,\,d y
	\notag
	\\
	&=\sum\nolimits_{k=0}^A \,\frac{K!}{k!(K-k)!}\,\,E_k(x,q)\, ,
\end{align*}
where
\begin{align*}	
E_k(x,q)	=
	\int\nolimits_{\Rr}\,\, 
	e^{-2i\pi\,xy}\,\,\Big(
	\frac{\xi_0(y)}{y}\,\Big)^{(K-k)}\,
	\Big((1\,+\,q^2\,\, |y|^2)^{-a/2}\Big)^{(k)}\,\,d y.
\end{align*}
As usually, $f^{(k)}$ denotes the $k$-th derivative of $f$.
For these integrals, we have 
\begin{equation*}
	|\,E_k(x,q)\,|\ll
	\int\nolimits_{2^{-3}}^1
	\,\frac{\max\,\{q^k,1\}}{(1\,+\,q\,y\,)^{a+k}}\,dy
	\ll\frac{1}{(1\,+\,q\,)^{a}}\,.
\end{equation*}
Therefore,
\begin{equation*}
(1+	|x|^K)\,|\,h_0(x,q)\,|\ll\frac{1}{(1\,+\,q\,)^{a}}\,.
\end{equation*}
and the required bound
follows.

Let us prove the second bound.
Integrating 
$K\ge 0$ times by parts, we obtain 
\begin{align*} 
	(2i\pi\,x)^K\,&h_{\infty}(x,q)=\int\nolimits_{\Rr}\,\, 
	e^{-2i\pi\,xy}\,\,\big(\frac{d}{dy}\big)^K\,\Big(
	y^{-1}\,\xi_{\infty}(y)\,
	(1\,+\,q^2\,\, |y|^2)^{-a/2}\Big)\,\,d y
	\notag
	\\
	&=\sum_{k_1+k_2+k_3=K} \frac{K!}{k_1!\,k_2!\,k_3!} 
	\,\,E_{k_1,k_2,k_3}(x,q)\, ,
\end{align*}
where
\begin{align*}
E_{k_1,k_2,k_3}&(x,q)
\notag
\\
&=\int\nolimits_{\Rr}\,\, 
e^{-2i\pi\,xy}\,\,
(y^{-1})^{(k_1)}\,\,\xi_{\infty}(y)^{(k_2)}\,\,
\big(\,(1\,+\,q^2\,\, |y|^2)^{-a/2}\,\big)^{(k_3)}dy\,.
\end{align*}
For these integrals we distinguish  the following cases. 
\\
\textit{(i)} For $k_1=k_2=0,\, k_3\ge0$,
\begin{align*}
	|\,E_{0,0,k_3}(x,q)\,|&\ll \int\nolimits_{2^{-2}}^{\infty}
	\,\,\frac{\max\,\{q^{k_3},1\}}{y\,(1\,+\,q\,y\,)^{a+k_3}}\,dy
	\notag
	\\
	&= \int\nolimits_{2^{-2}q}^{\infty}
	\,\,\frac{\max\,\{q^{k_3},1\}}{y\,(1\,+\,y\,)^{a+k_3}}\,dy
	\ll\frac{\log\,(2+q^{-1})}{(1\,+\,q\,)^{a}}\,.
\end{align*}
\textit{(ii)} For $k_1=0,\, k_2>0,\,k_3\ge0 $,
\begin{equation*}
	|\,E_{0,k_2,k_3}(x,q)\,|\ll
	\int\nolimits_{2^{-2}}^1
	\,\frac{\max\,\{q^k,1\}}{(1\,+\,q\,y\,)^{a+k}}\,dy
	\ll\frac{1}{(1\,+\,q\,)^{a}}\,.
\end{equation*}
\textit{(iii)} For $k_1>0,\, k_2=0,\,k_3\ge0 $,
\begin{align*}
	|\,E_{k_1,0,k_3}(x,q)\,|&\ll \int\nolimits_{2^{-2}}^{\infty}
	\,\,\frac{\max\,\{q^{k_3},1\}}{y^{k_1+1}\,(1\,+\,q\,y\,)^{a+k_3}}\,dy
	\ll\frac{1}{(1\,+\,q\,)^{a}}\,.
\end{align*}
\textit{(iv)} For $k_1>0,\, k_2>0,\,k_3\ge0 $,
\begin{align*}
	|\,E_{k_1,0,k_3}(x,q)\,|&\ll \int\nolimits_{2^{-2}}^{1}
	\,\,\frac{\max\,\{q^{k_3},1\}}{y^{k_1+1}\,(1\,+\,q\,y\,)^{a+k_3}}\,dy
	\ll\frac{1}{(1\,+\,q\,)^{a}}\,.
\end{align*}
All possible cases are considered. As a result,  we have
\begin{equation*}
	(1+	|x|^K)\,|\,h_{\infty}(x,q)\,|\ll
	\frac{\log\,(2+q^{-1})}{(1\,+\,q\,)^{a}}\,.
\end{equation*}
This proves the second bound.
\end{proof}

By lemma~5.2 we obtain the following bound for the product \eqref{eq6.18*b}		
\begin{align*}		
	\HHH&_{J}(\bfx,\bfq)
	\notag
	\\
	&\ll\prod\nolimits_{j\in[d]}(1\,+\,|x_j|\,)^{-A}\prod\nolimits_{j\in 
	J}\,\frac{\log\,(2+q_j^{-1})}{(1\,+\,q_j\,)^{a}}	\,
	\prod\nolimits_{j\in [d]\setminus 
	J}\,\frac{\log\,(2+q_j^{-1})}{(1\,+\,q_j\,)^{a}}
	\notag
	\\
	&\ll(1\,+\,|\bfx|_{\infty}\,)^{-A}\,\,\Upsilon_J (\bfq)\, .	
\end{align*}	
Replacing here $\bfx$ with $\nu(\bfx-\bfu)$, summing over $\bfx\in\Gamma$ 
and using Proposition~2.1,
we obtain the following bound
\begin{equation*}
\HHH_{J}(\,\nu,\Gamma, \bfq\,)\ll \big(\,\nu^{-d} 
+\lambda_1\,(\Gamma^{\perp})^{-d}\,\big)\,\,\Upsilon_J (\bfq)\, .	
\end{equation*}
       This inequality together with Lemma~5.1 implies \eqref{eq6.18bbb}.
	\end{proof}
		
\section{Proof of main bounds for lattice sums}
\label{sec6}		

We first specialize Proposition~5.1 for 
$\bfq=T^{-1}\,\bfm(L),\,\nu=\nu(\bfm(L)),\, 
\Gamma=D^{-1}_{\bfm(L)}\,\Lambda_{\theta}^{\perp}$, see \eqref{eq6.0012} and 
\eqref{eq6.000012}.
Notice also the duality relation 
$D_{\bfm(L)}^{-1}\,\Lambda_{\theta}^{\perp}=
(D_{\bfm(L)}\,\Lambda_{\theta})^{\perp}\,.$ 
\begin{proposition}
	Let a set of numbers 
	$\theta = (\theta_1,\dots,\theta_{d-1})$  be
	$\kappa$-multiplicatively approximable.  Then	
	\begin{align}
		\FFF_J(\,\nu(\bfm(L)),&\,D^{-1}_{\bfm(L)}\,
		\Lambda^{\perp}_{\theta},\,T^{-1}\bfm(L)\,)
		\notag
		\\
		&\le\,C(\kappa)\,\, 
		2^{\frac{\kappa}{\kappa +1}\, |L|_1}\,\,
		\frac{\,\big(\log T+|L|_1)^{|J'|}\,\log T}{(\,1+T^{-1}\,
			2^{\frac{1}{\kappa +1}\, |L|_1}\,\big)^a}
		\label{eq6.a012**}
	\end{align}
with an arbitrary fixed $a>1$ and a constant $C(\kappa)$ depending only on the 
constant $c(\kappa)$ in \eqref{eq6.02*}. 	
	\end{proposition}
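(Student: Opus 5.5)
We specialize Proposition~5.1 to $\nu=\nu(\bfm(L))$, $\Gamma=D^{-1}_{\bfm(L)}\Lambda^{\perp}_{\theta}$ and $\bfq=T^{-1}\bfm(L)$. This gives
\begin{equation*}
\FFF_J(\,\nu(\bfm(L)),D^{-1}_{\bfm(L)}\Lambda^{\perp}_{\theta},T^{-1}\bfm(L)\,)
\le C_{a,\phi}\big(\nu(\bfm(L))^{-d}+\lambda_1(\Gamma^{\perp})^{-d}\big)\,\Upsilon_J(T^{-1}\bfm(L))\,,
\end{equation*}
and the plan is to bound the two factors on the right separately.

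\emph{The lattice factor.} By the duality relation we have $\Gamma^{\perp}=D_{\bfm(L)}\Lambda_{\theta}$. The left equality in \eqref{eq6.012*} of Proposition~3.1 then gives $\lambda_1(\Gamma^{\perp})^d=\nu(\bfm(L))^d$, so both terms in the bracket are equal. The right inequality in \eqref{eq6.012*} gives
\begin{equation*}
\nu(\bfm(L))^{-d}<c_2(\kappa)^{-1}\,2^{\frac{\kappa}{\kappa+1}|L|_1}\,.
\end{equation*}
This produces the factor $2^{\frac{\kappa}{\kappa+1}|L|_1}$ in \eqref{eq6.a012**}.

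\emph{The factor $\Upsilon_J$.} The coordinates of $\bfq$ are $q_j=T^{-1}2^{-l_j-1}$ for $j\in[d-1]$ and $q_d=T^{-1}\mu(L)$. In \eqref{eq6.18bbbb} we bound every factor $(1+q_j)^{-a}$ with $j\le d-1$ trivially by $1$.

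For $j\in J'$ we have $l_j=0$, so $q_j=T^{-1}/2$ and $\log(2+q_j^{-1})\ll\log T$. Here we assume $T\ge 2$; this is harmless, since for small $T$ everything is bounded.

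Since $d\notin J$, the last coordinate also contributes a logarithm, together with the decay factor $(1+T^{-1}\mu(L))^{-a}$. We must handle its logarithm $\log(2+T\mu(L)^{-1})$. Because $\mu(L)\ge1$, it is $\ll\log T$. By \eqref{eq6.012**} we have $\mu(L)>c_1(\kappa)2^{\frac{1}{\kappa+1}|L|_1}$, and $t\mapsto(1+t)^{-a}$ is decreasing, so
\begin{equation*}
(1+T^{-1}\mu(L))^{-a}\le C\,(1+T^{-1}2^{\frac{1}{\kappa+1}|L|_1})^{-a}\,,
\end{equation*}
where $C$ absorbs the constant $c_1$, for instance via $(1+ct)^{-a}\le\max(1,c^{-a})(1+t)^{-a}$.

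Collecting the logarithms gives $(\log T)^{|J'|}\log T$. This is bounded by the claimed $(\log T+|L|_1)^{|J'|}\log T$, since $|L|_1\ge0$.

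\emph{Main obstacle.} The delicate point is only bookkeeping. One must apply the monotonicity correctly so that \eqref{eq6.012**} yields the decay in $|L|_1$ with a constant depending only on $c(\kappa)$, and treat the logarithm $\log(2+q_j^{-1})$ uniformly; in particular, $\mu(L)\ge1$ must be used to avoid any $|L|_1$-dependence in the $d$-th logarithm. The stated bound, with its weaker $(\log T+|L|_1)^{|J'|}$, leaves slack: if one instead attached logarithms to the $j\in J$ coordinates, where $q_j^{-1}=T2^{l_j+1}$, one would get $\log T+l_j\le\log T+|L|_1$. Either way the bound follows.
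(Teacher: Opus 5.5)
Your proof is correct and is essentially the paper's argument: you specialize Proposition~5.1, bound the lattice factor by both parts of \eqref{eq6.012*}, and obtain the decay in $|L|_1$ from \eqref{eq6.012**} via monotonicity. Your use of $l_j=0$ on $J'$ and of $\mu(L)\ge 1$ for the $d$-th logarithm is slightly tidier than the paper's bookkeeping, and it even gives $(\log T)^{|J'|+1}$.

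One correction: $T\ge 2$ cannot be dismissed by saying small $T$ is bounded, because at $T=1$ the right-hand side vanishes. It has to be read as an implicit hypothesis of the statement; the paper tacitly assumes the same and only uses the bound for $T>4$.
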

\begin{proof}
For the indicated parameters the inequality in Proposition~5.1 takes the form
	\begin{align}
		\FFF_J(\,\nu(\bfm(L)),&\,D^{-1}_{\bfm(L)}\,
		\Lambda^{\perp}_{\theta},\,T^{-1}\bfm(L),\,)
		\notag
		\\
		&\le C_{a,\phi}\,\big(\,\nu(\bfm(L))^{-d} 
		+\lambda_1\,(D_{\bfm(L)}\,
		\Lambda_{\theta})^{-d}\,\big)\,\, \Upsilon (\,T^{-1}\bfm(L)\,),	
		\label{eq6.180bbb}
	\end{align}
	where
	\begin{align}
		\Upsilon (&T^{-1}\bfm(L))
		\notag
		\\
		&=\prod\nolimits_{j\in J}\frac{1}{(1+T^{-1}m_j)^a}\,\,
		\cdot\prod\nolimits_{j\in J'}\frac{\log\, 
		(\,2+T m_j^{-1})\,}{(1+T^{-1}m_j)^a}\,\cdot
		\frac{\log\, 
			(\,2+Tm_d^{-1})\,}{(1+T^{-1}m_d)^a}		
		\label{eq6.018bbbb}
	\end{align}
	with  $a>1$. Here $m_j=2^{-l_j-1},\,j\in [d-1],\,\,m_d=\mu(L),\, 
	L=(l_1,\dots,l_d)\in\ZZZ_J.$ 
	Notice that $m_j,\,j\in [d-1],$ are ``small'', and
	$m_d$ is ``large''.
	Using Proposition~3.1, we estimate the product \eqref{eq6.018bbbb} as follows
	\begin{align*}
		\Upsilon (T^{-1}\,\bfm(L))
		&<\prod\nolimits_{j\in J'}\,\log\, 
			(\,2+T\, m_j^{-1}\,)\cdot
		\frac{\log 
			(\,2+T\,m_d^{-1}\,)}{(\,1+T^{-1}\,m_d \,)^a}	
			\notag
			\\
	&=\prod\nolimits_{j\in J'}\,\log \, 
	(\,2+T \, 2^{l_j+1}\,)\cdot
	\frac{\log (\, 2+T\,\mu(L)^{-1}\,)}{(\,1+T^{-1}\,\mu(L)\,)^a}
	   \notag
	   \\
	&\ll\prod\nolimits_{j\in J'}\,\log \, 
	(\,2+T \, 2^{|L|_1}\,)\cdot
	\frac{\log (\, 2+T\,2^{\frac{-1}{\kappa +1}\, 
	|L|_1}\,)}{(\,1+T^{-1}\,2^{\frac{1}{\kappa +1}\, |L|_1}\,)^a} 
        \notag
        \\
        &\ll\frac{\,\big(\log T+|L|_1\,\big)^{|J'|}\,\log T}
        {\big(\,1+T^{-1}\,2^{\frac{1}{\kappa +1}\, 
        		|L|_1}\,\big)^a}\,.	
	\end{align*}
	
On the other hand, by Proposition~3.1, we also have
\begin{equation*}
\nu(\bfm(L))^{-d} 
+\lambda_1\,(D_{\bfm(L)}\,
\Lambda_{\theta})^{-d}\, \ll\, 2^{\frac{\kappa}{\kappa +1}\, |L|_1}
	\end{equation*}
	
	Substituting these bounds into 
	\eqref{eq6.180bbb}, we obtain \eqref{eq6.a012**}.	
	\end{proof}
	
	To complete the proof of Theorem~1.2 we will apply Proposition~6.1 to 
	Proposition~4.1. 
	Let $T>4$ and $s=\lfloor (\kappa+1)\log T\rfloor >0$, so that
	$$2^{\frac{s}{\kappa +1}}\le T< 2^{\frac{s+1}{\kappa +1}}\,.$$
	We also have
	\begin{equation*}
		\#\,\{\,L\in\ZZZ_J: |L|_1=\sum\nolimits_{j\in J}\, l_j=n\,\}
		\le c_J (\,n+1\,)^{|J|-1}\, .	
	\end{equation*}	
	Substituting \eqref{eq6.a012**} with $a>\kappa$ into \eqref{eq6.18**}, we obtain
\begin{align*}
S(\,\Lambda_{\theta}, T\,)&\le\sum\nolimits_{J\in [d-1]}\, 
\sum\nolimits_{L\in\ZZZ_J} 
\FFF_J(\,\nu(\bfm(L)),\,
D^{-1}_{\bfm(L)}\,\Lambda^{\perp}_{\theta}, \,T^{-1}\bfm(L)\,)
\notag
\\
&\ll\sum\nolimits_{J\in [d-1]}\,\sum\nolimits_{n\ge0}\,\, 2^{\frac{\kappa}{\kappa 
+1}\, n}\,\,
\frac{\,(n+1)^{|J|-1}\,(s+n)^{|J'|}\,s}{\big(\,1+\,
	2^{\frac{1}{\kappa +1}\, (n-s)}\,\big)^a}
	\notag
	\\
&=\sum\nolimits_{J\in [d-1]}\,\Big(\,\sum\nolimits_{0\le n\le s} 
+ \sum\nolimits_{n>s}\,\Big)\, .
\end{align*}
Taking into account that $|J|+|J'|=d-1$, we obtain for the first sum
\begin{align*}
\sum\nolimits_{0\le n\le s}\,	\ll\, s^{d-1}\,
\sum\nolimits_{0\le n\le s}\, 2^{\frac{\kappa}{\kappa +1}\, 
n}\,\ll \,2^{\frac{\kappa}{\kappa +1}\, s}\,\,s^{d-1}\,.
\end{align*}
For the second, we find
\begin{align*}
	\sum\nolimits_{n>s}
	&\ll\sum\nolimits_{n\ge s}\,\, 2^{\frac{\kappa}{\kappa +1}\, n}\,\,
	\frac{\,(n+1)^{|J|-1}\big(s+n)^{|J'|}\,s}{(\,1+\,
		2^{\frac{1}{\kappa +1}\, (n-s)}\,\big)^a}
	\notag
	\\
	&\ll\sum\nolimits_{n\ge s}\,\, 2^{\frac{\kappa}{\kappa +1}\, n-\frac{a}{\kappa 
	+1}\, (n-s)}\,\,
	\,(n+1)^{|J|-1}\big(s+n)^{|J'|}\,\,s
	\notag
	\\
	&\ll 2^{\frac{\kappa}{\kappa +1}\,s}\,\sum\nolimits_{m\ge 0}\,\,
	2^{\frac{\kappa -a}{\kappa +1}\, m}\,\, (m+s)^{d-2}\,\,s
	\notag
	\\
	&\ll 2^{\frac{\kappa}{\kappa +1}\,s}\,\, s^{d-1}\,.
\end{align*}
As a result, we have
\begin{equation*}
S(\,\Lambda_{\theta}, T\,)\ll 2^{\frac{\kappa}{\kappa +1}\,s}\,\, s^{d-1}
\ll T^{\kappa}\,\, (\log T)^{d-1} \,.
\end{equation*}

The proof of Theorem~1.2 is completed.

\enlargethispage{4\baselineskip}


\begin{thebibliography}{99}
	


	
	
	
		\bibitem{4***} 
	J.W.S.Cassels,
	\emph{An introduction to the geometry of numbers}, 
	Springer Verlag, Berlin, 1959.
	
	
	\bibitem{12}
	P.M.Gruber, C.G.Lekkerkerker,
	\emph{Geometry of numbers}, North--Holland Math. Libraey, vol. 37, Elsevier 
	Sci. Pub., Amsterdam, 1987.
	
	
	
	\bibitem{16}
	D.Y.Kleinbock,
	\emph{Ergodic Theory on Homogeneous Spaces and Metric Number Theory}, in
	Mathematics of Complexity and Dynamical Systems (Ed. R.A.Meyers), 
	Springer, New York, 2011.
	
	
	
	
	\bibitem{4****} 
	L.Kuipers, H.Niederreiter,
	\emph{Uniform distribution of sequences}, 
	John Wiley \& Sons, N.Y., 1974.
	
	
	\bibitem{16a}
	G.A.Margulis,  
	\emph{Diophantine approximation, lattices and ﬂows on homogeneous spaces}, in 
	A panorama of number theory or the view from Baker’s garden (Ed. G.Wüstholz), 
	pp. 280--310, Cambridge Univ. Press, 2002.
	
	
	
	
    
    \bibitem{19} 
	W.M.Schmidt,
	\emph{Diophantine Approximation}, Lect. Not. in Math., vol. 785,
	Springer--Verlag, Berlin, 1980.
	

	\bibitem{21} 
	M.M.~Skriganov,
	\emph{Constructions of uniform distributions in terms of geometry numbers}, 
	Algebra i Analiz, {\bf 6}(3), (1994), 200--230; reprinted in St.Petersburg 
	Math. J., {\bf 6}(3), (1995), 635--664.
	
    
    \bibitem{22} 
    M.M.~Skriganov,
    \emph{Ergodic theory on $SL(n)$, Diophantine approximations and anomalies 
    in the lattice point problem}, 
    Inventiones Math., {\bf 132}(1), (1998), 1--72.
      
   
   \bibitem{22*} 
   M.M.~Skriganov,
   \emph{Integer points in a simplex 
   	and related Diophantine problems: 
   Hardy--Littlewood asymptotics in higher dimensions}, (to appear).
   
    
    \bibitem{24} 
    D.C.Spencer,
    \emph{The lattice points of tetrahedra}, 
    J. Math. Phys. Mass. Inst. Techn., {\bf 21}, (1942), 189--197;
    reprinted in D.C.Spencer, SELECTA, vol.1, pp. 24--32, World Sci. Pub., 1985.
    
    
    \bibitem{25} 
     V.G.Sprind\u{z}uk,
     \emph{Metric theory of Diophantine approximations}, John Wiley and Sons, 
     NY, 1979.


     \bibitem{26} 
     A.N.Starkov,
     \emph{Dynamical systems on homogeneous spaces},  Transl. Math. Monogr., 
     vol. 190, AMS,  Providence, 2000.


     \bibitem{27} 
     E.M.Stein, G.Weiss,
     \emph{Introduction to Fourier analysis on Euclidean spaces}, Princeton 
     Univ. Press, Princeton, 1971.

\end{thebibliography}
\end{document}